  \theoremstyle{plain}
  \newtheorem{Theorem}{Theorem}[section]
  \newtheorem{Lemma}{Lemma}[section]
  \newtheorem{Corollary}{Corollary}[section]
    \theoremstyle{remark}
  \newtheorem{remark}{Remark}
  \numberwithin{equation}{section}
  \numberwithin{figure}{section}
  \numberwithin{remark}{section}
\renewcommand{\baselinestretch}{1.00}
\begin{document}

\title{On the Dirichlet  problem for general augmented Hessian equations}

\author{Feida Jiang}
\address{College of Mathematics and Statistics, Nanjing University of Information Science and Technology, Nanjing 210044, P. R. China}
\email{jfd2001@163.com}

\author{Neil S. Trudinger}
\address{Centre for Mathematics and Its Applications, The Australian National University,
              Canberra ACT 0200, Australia; School of Mathematics and Applied Statistics, University of Wollongong, Wollongong, NSW 2522, Australia}
\email{Neil.Trudinger@anu.edu.au; neilt@uow.edu.au}

\thanks{Research supported by National Natural Science Foundation of China (No.11771214) and Australian Research Council (No.DP170100929).}

%    General info
%\subjclass[2000]{35J60 35J25 90B06  }

\date{\today}

\keywords{Dirichlet problem, augmented Hessian equations, second derivative estimates, regular matrix}

\abstract{In this paper we apply various first and second derivative estimates and barrier constructions from our  treatment of oblique boundary value problems for  augmented Hessian equations,  to the case of Dirichlet boundary conditions. As a result we extend our previous results  on the Monge-Amp\`ere and $k$-Hessian cases to general classes of augmented Hessian equations in Euclidean space.}

\endabstract

\maketitle

%\footnote { }

\baselineskip=12.8pt
\parskip=3pt
\renewcommand{\baselinestretch}{1.38}

\section{Introduction}\label{Section 1}
\vskip10pt

In this paper we apply various first and second derivative estimates and barrier constructions from our treatment of oblique boundary value problems in \cite{JT-oblique-I, JT-oblique-II} to the classical Dirichlet problem for general classes of augmented Hessian equations,
thereby extending our previous results in \cite{JTY2013, JTY2014} on the Monge-Amp\`ere and $k$-Hessian cases.

%In recent papers by the authors and Yang \cite{JTY2013, JTY2014}, we studied the Dirichlet problem of the Monge-Amp\`ere type equation and the $k$-Hessian equation with augmented Hessian matrices. We established the classical existence results in \cite{JTY2013, JTY2014} on bounded domains in $\mathbb{R}^n$ by assuming the existence of an admissible subsolution and the regularity of the matrix function $A$.
%In this paper, we shall consider the Dirichlet problem of a large class of augmented Hessian equations and extend the previous results to more general operators. In the direction of augmented Hessian equations in the framework of general operators, there are already several results. See for instance, Theorem 2.1 in \cite{Tru2006} for the interior second derivative estimate, and \cite{JT-oblique-I, JT-oblique-II} for recent study of the oblique boundary value problems.

We consider general augmented Hessian equations in the form,
\begin{equation}\label{1.1}
\mathcal{F}[u]:=F[D^2u-A(\cdot,u,Du)]=B(\cdot,u,Du), \quad {\rm in} \ \Omega,
\end{equation}
where the scalar function $F$ is defined on an open cone $\Gamma$ in $\mathbb{S}^n$, the linear space of  $n\times n$ real symmetric matrices,   $\Omega\subset \mathbb{R}^n$ is a bounded domain,
$A: \Omega \times \mathbb{R}\times \mathbb{R}^n\rightarrow \mathbb{S}^n$ is a symmetric matrix function and $B: \Omega \times \mathbb{R}\times \mathbb{R}^n\rightarrow \mathbb{R}$ is a scalar function. Our Dirichlet boundary conditions have the form
\begin{equation}\label{1.2}
\mathcal{G}[u]:=u-\varphi=0, \quad {\rm on} \ \partial\Omega,
\end{equation}
where $\varphi$ is a smooth function on $\partial\Omega$. As usual, $Du$ and $D^2u$ denote respectively the gradient vector and the Hessian matrix of the unknown function $u\in C^2(\Omega)$  and  we use $x, z, p$ and $r$ to denote  points  in $\Omega, \mathbb{R}, \mathbb{R}^n$ and $\mathbb{S}^n$, respectively. 
%In this paper, we aim to establish classical existence results for Dirichlet problem \eqref{1.1}-\eqref{1.2} under  minimal assumptions for $\mathcal{F}$.

 Following \cite{JT-oblique-I, JT-oblique-II} we assume further that  cone $\Gamma$ in $\mathbb{S}^n$ is convex, with vertex at $0$, containing the positive cone $K^+$,  and that $F\in C^2(\Gamma)$ satisfies the basic conditions:
\begin{itemize}
\item[{\bf F1}:]
$F$ is strictly increasing in $\Gamma$, that is
\begin{equation}\label{F1 inequality}
F_r := F_{r_{ij}} = \left \{ \frac{\partial F}{\partial r_{ij}} \right \} >0, \ {\rm in} \ \Gamma.
\end{equation}

\vspace{0.2cm}

\item[{\bf F2}:]
$F$ is concave in $\Gamma$, that is
\begin{equation}\label{F2 inequality}
\sum_{i,j,k,l =1}^n \frac{\partial^2 F}{\partial r_{ij}\partial r_{kl}} \eta_{ij} \eta_{kl} \le 0, \ {\rm in} \ \Gamma,
\end{equation}
for all symmetric matrices $\{\eta_{ij}\}\in \mathbb{S}^n$.

\vspace{0.2cm}

\item[{\bf F3}:]
$F(\Gamma)=(a_0, \infty)$ for a constant $a_0\ge -\infty$ with
\begin{equation}
\sup_{r_0\in \partial\Gamma}\limsup_{r\rightarrow r_0} F(r) \le a_0.
\end{equation}

\end{itemize}
We say that an operator $\mathcal{F}$ satisfies the above properties if the corresponding function $F$ satisfies them. Note that we can take the constant $a_0$ in F3 to be $0$ or $-\infty$. We also say that $\mathcal F$ is orthogonally invariant if $F$ is given as a symmetric function $f$ of the eigenvalues $\lambda_1, \cdots,\lambda_n$ of the matrix $r$, with $\Gamma$ closed under orthogonal transformations. While it was not essential for our study of oblique boundary conditions in \cite{JT-oblique-I}, the orthogonal invariance property of $\mathcal{F}$ is critical for our study of the Dirichlet problem \eqref{1.1}-\eqref{1.2}; see the $A=0$ case in \cite{CNS-Hessian, Tru1995} for example. In the orthogonally invariant case, we use
    \begin{equation}
        \tilde \Gamma =\lambda(\Gamma) =\{\lambda\in \mathbb{R}^n \ | \ \lambda=(\lambda_1, \cdots, \lambda_n) \ {\rm are \ eigenvalues\  of\ some} \ r\in \Gamma\}
    \end{equation}
to denote the corresponding cone to $\Gamma$ in $\mathbb{R}^n$. For convenience of later usage, we define for $k=1,\cdots, n$, the $k$ cone
    \begin{equation}\label{Garding's cone}
        \Gamma_k =\{r\in \mathbb{S}^n|\ S_j[r]>0, \ \forall j=1,\cdots, k\},
    \end{equation}
where $S_k$ denotes the $k$-th order elementary symmetric function defined by
    \begin{equation}
        S_k[r]:=S_k(\lambda(r)) = \sum_{i_1<\cdots <i_k}\lambda_{i_1} \cdots \lambda_{i_k}, \quad k=1,\cdots, n.
    \end{equation}

We call $M[u]:=D^2u-A(\cdot, u, Du)$ the augmented Hessian matrix, which is the standard Hessian matrix adjusted by subtraction of a lower order symmetric matrix function. A $C^2$ function $u$ is admissible in $\Omega$ ($\bar \Omega$), if
    \begin{equation}\label{admissible}
    M[u]\in \Gamma, \ {\rm in}\ \Omega, \ (\bar\Omega),
    \end{equation}
so that the operator $\mathcal{F}$ satisfying F1 is elliptic with respect to $u$ in $\Omega$ ($\bar \Omega$) when \eqref{admissible} holds. If an admissible function $u$ satisfies equation \eqref{1.1}, we call $u$ an admissible solution of equation \eqref{1.1}. Since $\mathcal{F}$ satisfies F3, the requirement $B>a_0$ in $\Omega$ ($\bar \Omega$) is necessary for an admissible solution of equation \eqref{1.1}. A function $\underline u\in C^2(\bar \Omega)$ is said to be admissible with respect to $u$ in $\Omega$ ($\bar \Omega$), if
    \begin{equation}
    M_u[\underline u] := D^2 \underline u - A(\cdot, u, D\underline u) \in \Gamma,\ {\rm in} \ \Omega, \ (\bar \Omega).    
    \end{equation}
Clearly if $A$ is independent of $z$, then $M_u[\underline u] = M[\underline u]$ so that $\underline u$ is admissible with respect to $u$ if and only if $\underline u$ is admissible. While if $A$ is non-decreasing in $z$, (non-increasing in $z$), then $M_u[\underline u] \ge M[\underline u]$ and $\underline u$ is admissible with respect to $u$, if $\underline u$ is admissible and $\underline u \ge u$, ($\le u$). If a function $\underline u$ ($\bar u$) satisfies
    \begin{equation}\label{sub sol}
    F(M_u[\underline u])\ge B(\cdot,u, D\underline u), \ (F(M_u[\bar u])\le B(\cdot,u, D\bar u)), 
    \end{equation}
at points in $\Omega$, we call $\underline u$ ($\bar u$) a subsolution (supersolution) of equation \eqref{1.1}. Moreover, we call $\underline u$ ($\bar u$) an admissible subsolution (supersolution) of equation \eqref{1.1} if $\underline u$ ($\bar u$) is admissible with respect to $u$.

The matrix $A$ is called regular (strictly regular), if
    \begin{equation}\label{regular}
    \sum_{i,j,k,l}^n A_{ij}^{kl}(x,z,p)\xi_i\xi_j\eta_k\eta_l \ge 0, \ (>0),
    \end{equation}
for all $(x,z,p)\in \Omega\times \mathbb{R} \times \mathbb{R}^n$, $\xi, \eta\in \mathbb{R}^n$ and $\xi\cdot \eta=0$, where $A_{ij}^{kl}=D^2_{p_kp_l}A_{ij}$. The regular condition \eqref{regular} was first introduced for the interior regularity in the context of optimal transportation in \cite{MTW2005} in its strict form, and subsequently used for the global regularity in \cite{TruWang2009} in its weak form. If \eqref{regular} holds without the restriction $\xi\cdot \eta=0$, the matrix $A$ is called regular without orthogonality. Note that the case when $A=A(x,z)$, and in particular the basic Hessian case $A\equiv 0$, satisfies the regular condition without orthogonality.

We now begin to formulate the main theorems of this paper. \begin{Theorem}\label{Th1.1}
Let $u\in C^4(\Omega)\cap C^2(\bar \Omega)$ be an admissible solution of Dirichlet problem \eqref{1.1}-\eqref{1.2}, where $\mathcal{F}$ is orthogonally invariant and satisfies F1-F3 in $\Gamma\subset \Gamma_1$, $A\in C^2(\bar \Omega\times\mathbb{R}\times \mathbb{R}^n)$ is regular in $\bar \Omega$, $B>a_0, \in C^2(\bar \Omega\times \mathbb{R}\times\mathbb{R}^n)$ is convex with respect to $p$. Assume there exists an admissible subsolution $\underline u\in C^2(\bar \Omega)$ satisfying \eqref{sub sol} in $\Omega$ and $\underline u=\varphi$ on $\partial\Omega$, with $\varphi\in C^4(\partial\Omega)$, $\partial\Omega\in C^4$. Then we have the estimate
    \begin{equation}\label{full 2nd estimate}
        \sup_{\Omega}|D^2u| \le C,
    \end{equation}
where the constant $C$ depends on $n, A, B, \Omega, \varphi, \underline u$, and $|u|_{1;\Omega}$.
\end{Theorem}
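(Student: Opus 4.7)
The plan is to split the estimate \eqref{full 2nd estimate} into a global-from-boundary reduction and then a boundary estimate, and to handle the boundary estimate separately for the three blocks (tangential-tangential, tangential-normal, normal-normal) at a fixed point $x_0\in\partial\Omega$. All constants produced along the way will be controlled by the data listed in the theorem, namely $n$, $A$, $B$, $\Omega$, $\varphi$, $\underline u$ and $|u|_{1;\Omega}$.

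For the global-from-boundary step, I would adapt the interior/global second derivative estimates from \cite{JT-oblique-I, JT-oblique-II}. Differentiating equation \eqref{1.1} twice in an arbitrary unit direction $\xi$ and working with an auxiliary function of the form $w=\eta(x)\,e^{\phi(|Du|^2,u)}M_{\xi\xi}[u]$, with $M_{\xi\xi}[u]=u_{\xi\xi}-A_{ij}(\cdot,u,Du)\xi_i\xi_j$, one computes $\mathcal{L}w$ at an interior maximum point, where $\mathcal{L}$ is the linearisation of $\mathcal{F}$. Here F1 provides ellipticity, F2 controls the unsigned fourth-order terms coming from $F_{r_{ij}r_{kl}}$, convexity of $B$ in $p$ absorbs the $B_{p_ip_j}$ contribution, and the regular condition \eqref{regular} on $A$ yields the nonnegative Ma--Trudinger--Wang type term that lets the remaining mixed third-order term be absorbed. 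The outcome is $\sup_{\bar\Omega}M_{\xi\xi}[u]\le C\bigl(1+\sup_{\partial\Omega}|D^2u|\bigr)$, hence $\sup_{\bar\Omega}|D^2u|\le C\bigl(1+\sup_{\partial\Omega}|D^2u|\bigr)$ by F1 and admissibility.

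For the boundary estimate, fix $x_0\in\partial\Omega$ and orient local coordinates so that $e_n$ is the inward normal. The tangential-tangential components $u_{\alpha\beta}(x_0)$, $\alpha,\beta<n$, are determined by differentiating the identity $u=\varphi$ twice in tangential directions, which gives $u_{\alpha\beta}(x_0)=\varphi_{\alpha\beta}(x_0)-u_n(x_0)\kappa_{\alpha\beta}(x_0)$, with $\kappa_{\alpha\beta}$ the second fundamental form of $\partial\Omega$; this is bounded by $|u|_{1;\Omega}$, $|\varphi|_{C^2}$ and the $C^2$-norm of $\partial\Omega$. For the mixed components $u_{n\alpha}(x_0)$ I would borrow the barrier construction from \cite{JT-oblique-I, JT-oblique-II}: apply $\mathcal{L}$ to the tangential operator $T_\alpha(u-\underline u)$ in a one-sided neighbourhood of $x_0$, combined with an auxiliary function of the form $v=a(\underline u-u)+b|x-x_0|^2-c\,d(x)$, where $d$ is the distance to $\partial\Omega$ and $a,b,c$ are suitable constants; admissibility of $\underline u$, strict monotonicity F1, and convexity of $B$ in $p$ give $\mathcal{L}v$ a definite sign, so the maximum principle pins down $|u_{n\alpha}(x_0)|$.

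The main obstacle is the double-normal bound $u_{nn}(x_0)\le C$. Here I would rely critically on the orthogonal invariance of $\mathcal{F}$: writing $F$ as a symmetric function $f$ of the eigenvalues, if $u_{nn}(x_0)$ were to blow up along a sequence then the eigenvalues of $M[u](x_0)$ would asymptotically split into the eigenvalues of the bounded tangential block plus one large eigenvalue in the normal direction. By the equation we need $f(\lambda(M[u](x_0)))=B(x_0,u,Du)$, which is bounded; a compactness argument as in the $A\equiv 0$ treatment of \cite{CNS-Hessian, Tru1995} then forces the remaining eigenvalues to approach $\partial\tilde\Gamma$, contradicting the fact that the admissible subsolution $\underline u$ satisfies $\lambda(M_u[\underline u])$ in a fixed compact subset of $\tilde\Gamma$ with $f(\lambda(M_u[\underline u]))\ge B$. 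Quantitatively, comparing the eigenvalues of $M[u](x_0)$ with those of $M_u[\underline u](x_0)$ and invoking F3 together with $\Gamma\subset\Gamma_1$ converts this into an explicit bound on $u_{nn}(x_0)$. Combining the three boundary bounds with the global-from-boundary reduction then yields \eqref{full 2nd estimate}.
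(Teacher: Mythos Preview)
Your global-from-boundary reduction and the tangential-tangential boundary step are essentially what the paper does (the global step is quoted from \cite{JT-oblique-II}, Theorem~3.1). For the mixed tangential-normal step, the paper applies $\mathcal{L}$ to $T_\alpha(u-\varphi)$ and uses the \emph{exponential} barrier $\hat\eta = 1-e^{K(\underline u-u)}$ from Lemma~2.1 (this is Lemma~2.1(ii) of \cite{JT-oblique-II}), rather than a linear combination $a(\underline u-u)+b|x-x_0|^2-cd(x)$; the exponential is needed because the quadratic gradient term $K^2F^{ij}D_i(\underline u-u)D_j(\underline u-u)$ it generates is what absorbs the $p$-dependence of $A$ and produces the crucial lower bound $\mathcal{L}\hat\eta \le -\epsilon_1(1+\mathscr{T})$. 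Your linear barrier does not obviously yield this without further argument.

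The genuine gap is in your double-normal step. You argue that if $u_{nn}(x_0)$ blows up then the tangential eigenvalues of $M[u](x_0)$ must approach $\partial\tilde\Gamma$, and that this contradicts the subsolution having $\lambda(M_u[\underline u])$ in a fixed compact set. But $M'[u](x_0)$ and $M'_u[\underline u](x_0)$ are \emph{different} tangential blocks: from the boundary condition they differ by $D_\gamma(u-\underline u)\mathfrak{C}_{\alpha\beta}$ plus the difference $A_{\alpha\beta}(\cdot,\varphi,D'\varphi,-D_\gamma u)-A_{\alpha\beta}(\cdot,\varphi,D'\varphi,-D_\gamma\underline u)$. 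Nothing prevents $\lambda'(M'[u](x_0))$ from sitting near $\partial\tilde\Gamma'$ while $\lambda'(M'_u[\underline u](x_0))$ stays safely inside; bridging this gap is exactly where the regular condition on $A$ and the convexity of $B$ in $p$ enter, and your sketch does not invoke them here at all. The paper's actual argument (following \cite{Tru1995}) is substantially more delicate: one minimises $g(x)=f_R(\lambda'(M'[u]))-B[u]$ over $\partial\Omega$ at some $x_0$, compares with $h(x)=f_R(\lambda'(M'_u[\underline u]))-B_u[\underline u]$ using concavity of $f_R$ together with the regularity of $A$ (which gives $A_{\alpha\beta}$ convex in $p_n$) and convexity of $B$, extracts from $g(x_0)<h(x_0)/2$ a strictly positive coefficient $\vartheta(x_0)$, and then runs a \emph{second} barrier argument for the auxiliary function $v=D_n(u-\varphi)+\gamma_n^{-1}D_n(u-\varphi)(x_0)-\ell$ to finally bound $D_{nn}u(x_0)$. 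Both the choice of $x_0$ as the minimiser of $g$ and this second barrier step are essential and absent from your outline.
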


Note that for the second derivative estimate in Theorem \ref{Th1.1}, $\mathcal{F}$ is only assumed to be orthogonally invariant and satisfy the basic conditions F1, F2 and F3, where $a_0$ can be either finite or infinite. 
Since the global second derivative estimate of the form $\sup\limits_{\Omega}|D^2u|\le C(1+ \sup\limits_{\partial\Omega}|D^2u|)$ is already established in \cite{JT-oblique-II}, in order to prove the estimate \eqref{full 2nd estimate}, it is enough to obtain the boundary estimate for $\sup\limits_{\partial\Omega}|D^2u|$, which relies on the construction of the appropriate barrier functions. Such a technical barrier construction will be discussed in Lemma \ref{Lemma 2.1}, as well as its strengthened version in Lemma \ref{Lemma 2.2}.

For gradient estimates, there are a range of conditions on $F$, $A$ and $B$. In  particular we recall a further condition for $F$  from  \cite{JT-oblique-I}, which along with F1-F3 is satisfied by our examples in Section 4 of  \cite{JT-oblique-I}, 
\begin{itemize}
\item[{\bf F7}:]
For a given constant $a>a_0$, there exists constants $\delta_0, \delta_1>0$ such that 
$$F_{r_{ij}}\xi_i\xi_j \ge \delta_0 + \delta_1 \mathscr{T},$$ 
if $a\le F(r)$ and $\xi$ is a unit eigenvector of $r$ corresponding to a negative eigenvalue, where $\mathscr{T}={\rm trace}(F_r)$.
\end{itemize} 
To apply F7, apart from orthogonal invariance, we also need (almost quadratic) structure conditions on $A$ and $B$, which we write here in a fairly general form:
\begin{equation}\label{structure condition 1}
A =o(|p|^2)I, \quad\quad p\cdot D_pA \le O(|p|^2) I, \quad\quad  p\cdot D_pB \le O(|p|^2),
\end {equation}
\begin{equation}\label{structure condition 2}
p\cdot D_xA +|p|^2 D_zA\ge o(|p|^4)I, \quad\quad p\cdot D_xB +|p|^2 D_zB\ge o(|p|^4), 
\end{equation} 
as $|p|\rightarrow \infty$, uniformly for $x\in \Omega$, $|z|\le M$ for any $M> 0$, where $I$ denotes the $n\times n$ identity matrix. A global gradient estimate  then follows from our proof of case (ii) of Theorem 1.3 in Section 3 of \cite{JT-oblique-I}, while for a local gradient estimate we need to strengthen the last two inequalities in \eqref {structure condition 1}:
\begin{equation}\label{structure condition 3}
D_pA, D_pB = O(|p|).
\end{equation}
Note that in the special case when $\Gamma=K^+$, we only need the  one-sided quadratic structure $A\ge O(|p|^2)I$, as $|p|\rightarrow \infty$, uniformly for $x\in \Omega$, $|z|\le M$, for any $M> 0$, \cite{JTY2013}, while from \cite{JT-oblique-I}, if $\Gamma=\Gamma_k$ for $n/2 <k<n$, we can weaken ``$o$'' to ``$O$'' in \eqref{structure condition 2}, (at least when $a_0$ is finite), with \eqref{structure condition 1} replaced by \eqref{structure condition 3}, that is a quadratic structure is sufficient. By a slight modification of our arguments in Section 3 of \cite{JT-oblique-I},  we can use F2 instead of F7 in the global gradient bound, under some slight strengthening of our conditions on $F,A$ and $B$ which, for example, would still embrace the basic examples of functions $F$ which are positive homogeneous of degree one and involve replacing ``$O$'' by ``$o$'' throughout \eqref{structure condition 1}. These alternative conditions are also discussed in the case of oblique boundary conditions in Section 3 of \cite{JT-oblique-III}.
 
Further conditions for gradient bounds for strictly regular $A$ are given in \cite{JT-oblique-I}. These global gradient estimates  reduce the full gradient bound to the gradient bound on the boundary, which is readily deduced under our assumptions; see Section \ref{Section 3}.

The maximum modulus estimate for solution $u$ of the Dirichlet problem \eqref{1.1}-\eqref{1.2} is guaranteed by assuming the existence of  an admissible subsolution and a supersolution of the problem. Since we assume the admissible subsolution $\underline u\in C^2(\bar \Omega)$ satisfies $\underline u=\varphi$ on $\partial\Omega$, we already have the lower solution bound $u\ge \underline u$ in $\bar \Omega$ by the comparison principle. For the upper solution bound, we can assume $-A(x,z, 0)\notin \Gamma$ for all $x\in \Omega$ and $z\in \mathbb{R}$, which implies large constant functions are supersolutions. More generally we can assume that there exists a bounded viscosity supersolution $\bar u$ as in \cite{JTY2013}, so that $u\le \bar u$ on $\bar \Omega$.

We now formulate the following existence theorem for classical admissible solutions, where $A$ and $B$ are independent of $z$.
\begin{Theorem}\label{Th1.2}
Assume that $\mathcal{F}$ is orthogonally invariant and satisfies F1-F3, F7 in $\Gamma\subset \Gamma_1$, $\Omega$ is a bounded domain in $\mathbb{R}^n$ with $\partial\Omega\in C^4$, $A\in C^2(\bar \Omega\times \mathbb{R}^n)$ is regular in $\bar \Omega$, $B>a_0, \in C^2(\bar \Omega\times \mathbb{R}^n)$ is convex with respect to $p$. Assume there exist a bounded viscosity supersolution $\bar u$ and a subsolution $\underline u\in C^2(\bar \Omega)$ satisfying $\underline u = \varphi$ on $\partial\Omega$ with $\varphi\in C^4(\partial\Omega)$. Assume also either \eqref{structure condition 1} and \eqref{structure condition 2} hold or $\Gamma=K^+$, and $A(x,p)\ge O(|p|^2)I$ as $|p|\rightarrow \infty$, uniformly for $x\in \Omega$. 
 Then there exists a unique admissible solution $u\in C^3(\bar \Omega)$ of the Dirichlet problem \eqref{1.1}-\eqref{1.2}. Moreover, if $\Gamma=\Gamma_k$ for $n/2<k<n$, the conclusion still holds by replacing  \eqref{structure condition 1} by \eqref{structure condition 3} and weakening``$o$'' to ``$O$'' in \eqref{structure condition 2}.
\end{Theorem}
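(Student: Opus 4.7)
The plan is to establish existence by the method of continuity, combined with the full chain of a priori $C^2$ and higher order estimates, and to obtain uniqueness from the standard comparison principle for admissible solutions of the concave elliptic operator $\mathcal{F}$. For uniqueness, if $u_1, u_2$ are two admissible solutions, then by F1 and F2 the operator $\mathcal{F}$ is elliptic and concave along the segment joining $u_1$ and $u_2$, and since $B$ is convex in $p$ and independent of $z$, the difference $u_1-u_2$ satisfies a linear elliptic inequality with zero boundary data, so the maximum principle gives $u_1\equiv u_2$.

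For existence, I would embed the problem in a one-parameter family $\mathcal{F}[u_t]=B_t$ in $\Omega$, $u_t=\varphi$ on $\partial\Omega$, for $t\in[0,1]$, designed so that $t=0$ admits an admissible solution (for instance taking $u_0=\underline u$ after modifying the right-hand side, or deforming $A$ and $B$ linearly toward data for which $\underline u$ itself solves the equation, while keeping $\underline u$ a subsolution along the path). The set $I\subset[0,1]$ for which a classical admissible solution exists is then shown to be both open and closed. Openness follows from the implicit function theorem applied in $C^{2,\alpha}(\bar\Omega)$: the linearization at an admissible solution $u_t$ is a linear elliptic operator of the form $\mathcal{L}_t w = F_r^{ij}(M[u_t])(w_{ij}-D_{p_k}A_{ij}w_k) - D_{p_k}B \, w_k - D_z(\cdots)w$, with the $z$-derivative terms vanishing under our assumption that $A,B$ are independent of $z$. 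The resulting linear Dirichlet problem is uniquely solvable in $C^{2,\alpha}(\bar\Omega)$ by standard Schauder theory, so openness follows.

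Closedness is where all of the preceding work is consumed. The $C^0$ bound is immediate from the comparison principle: $\underline u\le u_t\le \bar u$ on $\bar\Omega$. The boundary gradient estimate follows from the subsolution $\underline u$ together with a standard barrier argument using the admissible supersolution $\bar u$, as indicated in the paragraph preceding the theorem. The global gradient estimate is then supplied by the results cited from \cite{JT-oblique-I}: either the F7-based argument under the structure conditions \eqref{structure condition 1}--\eqref{structure condition 2}, or the $\Gamma=K^+$ version using the one-sided quadratic condition on $A$, or the $\Gamma=\Gamma_k$ variant with $n/2<k<n$ using \eqref{structure condition 3} and the weakened \eqref{structure condition 2}. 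Once $|u_t|_{1;\Omega}$ is controlled, Theorem \ref{Th1.1} applies uniformly in $t$ and yields $|D^2u_t|\le C$. Since the admissibility cone $\Gamma$ is open and $M[u_t]\in\Gamma$ with $F(M[u_t])=B_t$ bounded and bounded away from $a_0$, the eigenvalues of $M[u_t]$ stay in a compact subset of $\Gamma$, so F1 and F2 make the equation uniformly elliptic along the family. Evans--Krylov then gives a uniform $C^{2,\alpha}$ estimate, and bootstrapping via Schauder's theorem yields $C^{3,\alpha}$, so $u_{t_k}\to u_{t_\infty}$ in $C^3$ along any sequence with $t_k\to t_\infty\in\bar I$, proving closedness.

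The main obstacle I expect is not any single analytic step but the interaction between the structure conditions and the admissibility cone: verifying that the solutions $u_t$ along the continuity path remain strictly admissible with $M[u_t]$ staying in a compact subset of $\Gamma$, so that $\mathcal{F}$ is genuinely uniformly elliptic and Evans--Krylov is applicable. This is automatic in the $\Gamma=K^+$ case but in the general $\Gamma\subset\Gamma_1$ and $\Gamma=\Gamma_k$ cases it requires that the lower bound on the eigenvalues of $M[u_t]$, forced by $F(M[u_t])=B_t>a_0$ together with the $C^2$ bound from Theorem \ref{Th1.1}, be uniform in $t$; this in turn depends on the quantitative form of F3 and on the fact that the subsolution $\underline u$ provides a uniform positive separation from $\partial\Gamma$ for $M_{u_t}[\underline u]$, hence also (by convexity of $\Gamma$ and the comparison argument) for $M[u_t]$. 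Once this is checked along the chosen deformation, the remaining pieces are routine applications of the quoted estimates and linear elliptic theory.
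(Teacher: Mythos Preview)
Your outline is correct and follows essentially the same route as the paper: comparison for $C^0$, boundary plus global gradient bounds, Theorem~\ref{Th1.1} for $C^2$, Evans--Krylov, then continuity along the family $\mathcal{F}[u]=tB+(1-t)\mathcal{F}[\underline u]$ with uniqueness from the maximum principle. One detail to correct: the upper bound for the inner normal derivative on $\partial\Omega$ is \emph{not} obtained from $\bar u$ (which is only a bounded viscosity supersolution and need not serve as a classical barrier), but rather from the inclusion $\Gamma\subset\Gamma_1$, which gives $\Delta u\ge \mathrm{trace}\,A(\cdot,Du)$, so that under the quadratic growth of $\mathrm{trace}\,A$ in $p$ the standard barrier construction of Theorem~14.1 in \cite{GTbook} applies.
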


Corresponding to our remarks above, we can relax condition F7, at least for finite $a_0$, in the above hypotheses for general $\Gamma$, provided ``$O$'' is strengthened to ``$o$'' in \eqref{structure condition 1}, see Corollary \ref{Cor 3.1}. Since $A$ and $B$ are independent of $z$, it is convenient to call $\underline u$ a subsolution as usual in Theorem \ref{Th1.2}, rather than an admissible subsolution.

Historically, the Dirichlet problem of the standard Hessian equations for general operators has been studied extensively in \cite{CNS-Hessian, Guan1994, Tru1995} and our conditions F1-F3 correspond to the basic conditions in these works. Second derivative estimates and the existence results are established under an associated uniform convexity of the domain or the existence of an admissible subsolution. Both the domain convexity and the subsolution are used to construct barrier functions, which are then used in the derivation of boundary second derivative estimates. For the Dirichlet problem of the augmented Hessian equations, we have treated the Monge-Amp\`ere case in \cite{JTY2013} and the $k$-Hessian case in \cite{JTY2014}, for regular matrices $A$, under the existence of a subsolution, which is also used to obtain the global second  derivative bounds. There are also recent studies of the Dirichlet problem \eqref{1.1}-\eqref{1.2} on  Riemannian manifolds, under more restrictive conditions on the matrix function $A$, \cite{Guan2014, GJ2015, GJ2016}, where the existence of a subsolution is also critical for such bounds. These stem from the basic Hessian case in \cite{Guan2014}, where such a technique is developed independently of our discovery through the Monge-Ampere case in \cite{JTY2013}. We also remark that our treatment here will also extend to the more general Riemannian manifold case and as well the condition F3 can be weakened as for example in \cite{ITW2004}; (see also \cite{JT-oblique-III}).

%As already indicated above, in this paper we extend our results  in \cite{JTY2013, JTY2014} to general classes of %operators, (which also yields some improvements of the case $A\equiv 0$ in \cite{CNS-Hessian, %Guan1994, %Tru1995}). For the second derivative estimates in Theorem \ref{Th1.1},  both the existence of an admissible %subsolution satisfying the same boundary condition and the regularity of the matrix $A$ are necessary; see for example %\cite{JTY2013, JTY2014}. Therefore, the hypotheses for the second derivative estimates in Theorem \ref{Th1.1} are %essentially sharp. 

%With regard to Theorem \ref{Th1.2}, we first note that under condition F7, case (i) of Theorem \ref{Th1.2} embraces  %augmented Hessian equations for any regular matrices $A$ satisfying the growth conditions %\eqref{structure %condition 2}.  In the absence of F7, case (ii) provides a general existence result under F1-F3 for  regular matrices $A$ %without orthogonality, satisfying the growth conditions \eqref{structure condition %-2}. By restriction to the positive cone %in case (iii), we get an existence result under F1-F3 for any regular matrices satisfying a quadratic bound from below. 

The essential ingredients in this paper are already in our papers \cite{JT-oblique-I, JT-oblique-II}. These are the global second derivative estimates in Section 3 of \cite{JT-oblique-II} and the global gradient estimates in Section 3 of \cite{JT-oblique-I}, in particular Remark 3.1. In Section \ref{Section 2} of this paper, we obtain the second derivative estimates on the boundary following the methods already established \cite{CNS-Hessian, Guan1994, Tru1995}   and thus complete the proof of Theorem \ref{Th1.1}. A strengthened technical barrier construction, already invoked for the basic Hessian case in \cite{Guan2014}, is also discussed, which provides an alternative approach to the estimates of the mixed tangential-normal derivatives and pure normal derivatives on the boundary. In Section \ref{Section 3}, we consider alternative gradient estimate hypotheses  and in particular derive the gradient estimate, with F7 replaced by F2,  by modification of our argument in case (ii) of Theorem 1.3 in \cite{JT-oblique-I}. Finally, we prove the existence of classical admissible solutions in Theorem \ref{Th1.2} by the method of continuity.

\section{Boundary estimates for second derivatives}\label{Section 2}
\vskip10pt

In this section, we shall make full use of the admissible subsolution $\underline u\in C^2(\bar \Omega)$ of equation \eqref{1.1} to establish the second derivative estimate $|D^2u|\le C$ on $\partial\Omega$. Together with the global second derivative bound in terms of its boundary bound in Theorem 3.1 in \cite{JT-oblique-II}, we can get full second derivative estimate \eqref{full 2nd estimate} in Theorem \ref{Th1.1} based on the boundary estimate in this section. We also discuss a new barrier construction, which provides a more direct approach in both the mixed tangential-normal derivative estimate and the pure normal derivative estimate on the boundary.

By a standard perturbation argument, we can make a non-strict admissible subsolution $\underline u\in C^2(\bar \Omega)$ of equation \eqref{1.1} to be a strict admissible $C^2$ subsolution of equation \eqref{1.1}. Similarly, for the admissible subsolution $\underline u$, if we restrict it in a neighbourhood of $\partial\Omega$, we can modify it to be a strict admissible subsolution satisfying the same boundary condition. It is also readily checked that the form of the equation \eqref{1.1} and the regularity condition \eqref{regular} can be preserved under translation and rotation of coordinates.

We now proceed to the boundary estimates. For any given point $x_0\in \partial\Omega$, by a translation and a rotation of the coordinates, we may take $x_0$ as the origin and $x_n$ axis to be the inner normal of $\partial\Omega$ at the origin. Near the origin, $\partial\Omega$ can be represented as a graph
$$x_n=\rho (x'),$$
such that $D^\prime \rho (0)=0$, where $D^\prime =(D_1,\cdots, D_{n-1})$ and $x'=(x_1, \cdots, x_{n-1})$. By tangentially differentiating \eqref{1.2} twice, we have
\begin{equation}\label{twice diff boundary}
D_{\alpha\beta}(u-\varphi) (0) = - D_n (u-\varphi)(0) \rho_{\alpha\beta}(0), \quad \alpha, \beta = 1,\cdots, n-1,
\end{equation}
which leads to the pure tangential estimate,
\begin{equation}\label{pure tangential}
|D_{\alpha\beta}u(0)|\le C, \quad \alpha, \beta=1,\cdots, n-1,
\end{equation}
where the constant $C$ depends on $\Omega$, $\varphi$ and $|u|_{1;\Omega}$.

We then estimate the mixed tangential-normal derivatives $|D_{\alpha n}u(0)|$ for $\alpha=1,\cdots, n-1$, by using a barrier argument. For this estimate, we consider the following operator
\begin{equation}\label{T operator}
T_\alpha :=\partial_\alpha + \sum_{\beta<n} \rho_{\alpha\beta}(0)(x_\beta \partial_n - x_n \partial_\beta), \quad {\rm for \ fixed} \ \alpha<n.
\end{equation}
By calculations, we have for $\alpha<n$,
    \begin{equation}\label{LTu}
        \begin{array}{rl}
            \mathcal{L}(T_\alpha u)
             = \!\!\! & \displaystyle \mathcal{L} u_\alpha +\sum_{\beta <n}\rho_{\alpha\beta}(0)(x_\beta \mathcal{L}u_n-x_n \mathcal{L}u_\beta)\\
             \!\!\! & \displaystyle + \sum_{\beta
            <n}\rho_{\alpha\beta}(0) [2(F^{\beta j}u_{nj}-F^{nj}u_{\beta j})\\
             \!\!\! & \displaystyle -F^{ij}(A_{ij}^\beta u_n-A_{ij}^n u_\beta)-(u_n D_{p_{\beta}}B-u_\beta D_{p_{n}}B) ],
        \end{array}
    \end{equation}
where $\mathcal{L}$ is the linearized operator defined by
\begin{equation}\label{mathcal L def}
\mathcal{L}:= F^{ij}D_{ij} - (F^{ij}A_{ij}^k-D_{p_k}B)D_k,
\end{equation}
with
$$F^{ij}:=\frac{\partial F}{\partial w_{ij}}, \quad {\rm and} \ \  A_{ij}^k:=D_{p_k}A_{ij},$$
where $w_{ij}:=u_{ij}-A_{ij}$.
By differentiation equation \eqref{1.1} with respect to $x_k$, we have
        \begin{equation}\label{LDku}
        \mathcal {L} u_k =F^{ij}(A_{ij}^k+u_k D_zA_{ij} )+ (D_{x_k}B+u_k D_zB),\ \ k=1,
        \cdots, n.
    \end{equation}
Since $\mathcal{F}$ is orthogonal invariant, we can derive, for $\alpha<n$,
    \begin{equation}
    \sum_{\beta <n}\rho_{\alpha\beta}(0) (F^{\beta j}w_{nj}-F^{nj}w_{\beta j})=0,
    \end{equation}
so that
    \begin{equation}\label{infinitesimal}
    \sum_{\beta <n}\rho_{\alpha\beta}(0) (F^{\beta j}u_{nj}-F^{nj}u_{\beta j})=\sum_{\beta <n}\rho_{\alpha\beta}(0) (F^{\beta j}A_{nj}-F^{nj}A_{\beta j}).
    \end{equation}
From \eqref{LTu}, \eqref{LDku} and \eqref{infinitesimal}, we obtain
\begin{equation}\label{standard inequality}
|\mathcal{L}(T_\alpha (u-\varphi))| \le C (1+\mathscr{T}), \quad {\rm in} \ \Omega,
\end{equation}
for $\alpha<n$, where the constant $C$ depends on $\Omega, A, B, \varphi$ and $|u|_{1;\Omega}$. For $\alpha<n$, we also have
\begin{equation}
|T_\alpha (u-\varphi)| \le C|x|^2, \quad {\rm on} \ \partial\Omega.
\end{equation}

We are now in a position to employ an appropriate barrier function. We present the following lemma without proof, which is a restatement of the general barrier construction in Lemma 2.1(ii) in \cite{JT-oblique-II}.
\begin{Lemma}\label{Lemma 2.1}
Let $u\in C^2(\bar \Omega)$ be an admissible solution of equation \eqref{1.1}, $\underline u\in C^2(\bar \Omega)$ be an admissible strict subsolution of equation \eqref{1.1} satisfying 
\begin{equation}\label{strict sb}
F(M_u[\underline u]) > B(\cdot, u, D\underline u), \quad {\rm in} \ \Omega.
\end{equation}
Assume $\mathcal{F}$ satisfies F1-F3, $A\in C^2(\bar \Omega\times\mathbb{R}\times \mathbb{R}^n)$ is regular in $\bar \Omega$, $B>a_0, \in C^2(\bar \Omega\times\mathbb{R}\times \mathbb{R}^n)$ is convex in $p$. Then there exist positive constants $K$ and $\epsilon_1$, depending on $\Omega, A, B, \underline u$ and $|u|_{1;\Omega}$, such that
\begin{equation}\label{barrier in Lemma 2.1}
\mathcal{L} \left [e^{K(\underline u-u)} \right ]\ge \epsilon_1 (1+\mathscr{T}), \quad {\rm in}\ \Omega,
\end{equation}
where $\mathcal{L}$ is the linearized operator defined in \eqref{mathcal L def}, and $\mathscr{T}={\rm trace}(F_r)$.
\end{Lemma}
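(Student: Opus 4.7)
The plan is to take $v = e^{K(\underline u - u)}$, set $w := u - \underline u$, and compute $\mathcal{L} v$ by direct differentiation. Since $D v = -K v D w$ and $D^2 v = K v(-D^2 w + K D w \otimes D w)$, substitution into \eqref{mathcal L def} yields
\[
\mathcal{L} v = K v\left[-F^{ij}D_{ij} w + K F^{ij} w_i w_j + (F^{ij} A^k_{ij} - D_{p_k} B)\, w_k\right],
\]
with $A^k_{ij}, D_{p_k}B$ evaluated at $(x,u,Du)$. The task reduces to bounding the bracket below by a constant multiple of $(1+\mathscr{T})/K$.

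Next I would estimate $-F^{ij}D_{ij}w$ from below by invoking F2 (concavity) at $M[u]$ in the direction $M_u[\underline u] - M[u]$:
\[
F(M_u[\underline u]) - F(M[u]) \le F^{ij}(M[u])(M_u[\underline u] - M[u])_{ij},
\]
and rearranging, using $F(M[u]) = B(\cdot,u,Du)$ together with the strict subsolution bound $F(M_u[\underline u]) \ge B(\cdot,u,D\underline u) + \delta_0$ for some $\delta_0 > 0$:
\[
-F^{ij}D_{ij}w \ge \delta_0 + [B(\cdot,u,D\underline u) - B(\cdot,u,Du)] + F^{ij}[A_{ij}(\cdot,u,D\underline u) - A_{ij}(\cdot,u,Du)].
\]
Convexity of $B$ in $p$ provides $B(D\underline u) - B(Du) \ge -D_{p_k}B(Du)\, w_k$, which combines with the $-D_{p_k}B\, w_k$ piece of $\mathcal{L}v$ to produce $-2D_{p_k}B(Du)\, w_k$. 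A second-order Taylor expansion of $A$ in $p$, combined with the $F^{ij}A^k_{ij}(Du)\, w_k$ term of $\mathcal{L}v$, cancels the linear piece and leaves the quadratic remainder $F^{ij}\int_0^1 s\, A^{kl}_{ij}(\cdot,u,D\underline u + s w)\, w_k w_l\, ds$, so that
\[
\mathcal{L} v \ge K v\left[\delta_0 - 2 D_{p_k}B\, w_k + F^{ij}\!\int_0^1 s\, A^{kl}_{ij}\, w_k w_l\, ds + K F^{ij} w_i w_j\right].
\]

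The main obstacle is extracting the $(1+\mathscr{T})$ structure from this lower bound. The constant $1$ comes cleanly from $\delta_0$ once $K F^{ij}w_iw_j$ absorbs $-2D_{p_k}B\,w_k$ via Cauchy--Schwarz and the $C^1$ bound on $u$, but the $\mathscr{T}$ part requires a careful interplay of orthogonal invariance, the regularity of $A$, and the quadratic term $K F^{ij}w_iw_j$. Using orthogonal invariance to simultaneously diagonalize $F^{ij}$ in an orthonormal eigenbasis $\{e_i\}$, I would write $F^{ij}A^{kl}_{ij}w_kw_l = \sum_i F^i A^{kl}_{ii}w_kw_l$ and decompose $w = (w\cdot e_i)\, e_i + w^{(i)}_\perp$ with $w^{(i)}_\perp \perp e_i$. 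The regular condition applied at $\xi = e_i$ forces the perpendicular piece $A^{kl}_{ii}(w^{(i)}_\perp)_k(w^{(i)}_\perp)_l$ to be non-negative, while the parallel and cross contributions are estimated by constants times $(w\cdot e_i)^2$ and absorbed by $K F^{ij}w_iw_j = K\sum_i F^i(w\cdot e_i)^2$ once $K$ is large. A cleaner alternative, which I would pursue in parallel, is first to perturb $\underline u$ by a small quadratic $\epsilon(|x|^2 - R^2)/2$ (preserving strict subsolvability for $\epsilon$ small), since by F2 applied with the perturbed matrix $M_u[\underline u] + \epsilon I + O(\epsilon)$, one obtains a direct extra contribution $\epsilon F^{ij}\delta_{ij} = \epsilon\mathscr{T}$ in the lower bound on $-F^{ij}D_{ij}w$, supplying the $\mathscr{T}$ piece without further bookkeeping. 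Choosing $K$ sufficiently large and $\epsilon_1$ sufficiently small, both depending on the data listed in the statement, yields $\mathcal{L}v \ge \epsilon_1(1+\mathscr{T})$, as required.
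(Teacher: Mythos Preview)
The paper does not give its own proof of this lemma; it states it as a restatement of Lemma~2.1(ii) in \cite{JT-oblique-II}. So there is no in-paper argument to compare against, and I will just assess your proposal on its own merits.

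Your overall structure is the right one: compute $\mathcal{L}v$ for $v=e^{K(\underline u-u)}$, invoke the concavity F2 to bound $-F^{ij}D_{ij}w$ from below, use convexity of $B$ in $p$ and a second-order Taylor expansion of $A$ in $p$ to rearrange, and then exploit regularity of $A$ together with the quadratic term $KF^{ij}w_iw_j$ to absorb the remainder. Two points, however, need correction.

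First, your approach (a) does \emph{not} produce the $\mathscr{T}$ term. Diagonalising $F^{ij}$ and using regularity of $A$ only shows that the remainder $F^{ij}\!\int A^{kl}_{ij}w_kw_l$ is bounded below by $-C\bigl(F^{ij}w_iw_j+\epsilon'\mathscr{T}\bigr)$, which can be absorbed; it never yields a \emph{positive} multiple of $\mathscr{T}$. If $Dw$ happens to vanish at a point, every term in your bracket except $\delta_0$ is zero there, so you would get only $\mathcal{L}v\ge Kv\,\delta_0$, with no $\mathscr{T}$ at all. (A side remark: this diagonalisation requires only that $\{F^{ij}\}$ is a symmetric positive matrix, not that $\mathcal{F}$ is orthogonally invariant; the lemma does not assume the latter.)

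Second, your approach (b) has the right idea but the sign of the shift is wrong. Adding $+\epsilon I$ to $M_u[\underline u]$ and applying F2 there places the $\epsilon\mathscr{T}$ on the wrong side and yields $-\epsilon\mathscr{T}$ in the lower bound for $-F^{ij}D_{ij}w$. The correct move is to apply F2 at the \emph{shifted} point $M_u[\underline u]-\epsilon I$, which lies in $\Gamma$ for small $\epsilon$ because $M_u[\underline u]$ ranges over a compact subset of the open cone $\Gamma$. Concavity then gives
\[
F^{ij}\bigl(M_u[\underline u]-M[u]\bigr)_{ij}\ \ge\ \epsilon\,\mathscr{T}\ +\ F\bigl(M_u[\underline u]-\epsilon I\bigr)-F(M[u]),
\]
and by continuity $F(M_u[\underline u]-\epsilon I)>B(\cdot,u,D\underline u)+\delta_0/2$ for $\epsilon$ small, so the constant and the $\mathscr{T}$ term appear simultaneously. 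With this single fix, the rest of your argument (convexity of $B$, Taylor expansion of $A$, regularity to control the quadratic $A$-remainder, and a large choice of $K$) goes through and yields \eqref{barrier in Lemma 2.1}.
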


By applying Lemma \ref{Lemma 2.1} to our strict subsolution $\underline u$ satisfying \eqref{strict sb} in the neighbourhood of the boundary, we then have
\begin{equation}\label{2.27}
\mathcal{L} \eta \ge \epsilon_1 (1+\mathscr{T}), \quad {\rm in}\ \Omega_\rho,
\end{equation}
with $\eta :=\exp[K(\underline u-u)]$, $\underline u=u=\varphi$ on $\partial\Omega$, and $\Omega_\rho:=\{x\in \Omega|
\ d(x)<\rho \}$. Then the function $\hat \eta =1 -\eta$ satisfies
\begin{equation}
\begin{array}{cl}
\mathcal{L}\hat \eta \le -\epsilon_1 (1+\mathscr{T}), & \quad {\rm in} \ \Omega_{\rho},\\
           \hat \eta  = 0, & \quad {\rm on} \ \partial\Omega, \\
           \hat \eta  > 0, & \quad {\rm on} \ \partial\Omega_\rho \cap \Omega.
\end{array}
\end{equation}
Letting
\begin{equation}\label{final barrier}
\tilde \eta = a \hat \eta + b|x|^2,
\end{equation}
with positive constants $a \gg b \gg 1$, we then have for $\alpha<n$,
\begin{equation}
\begin{array}{rll}
\mathcal{L}\tilde \eta  + |\mathcal{L} (T_\alpha(u-\varphi))| \!\!&\!\!\displaystyle \le 0, & \quad {\rm in} \ \Omega_{\rho},\\
           \tilde \eta + |T_\alpha(u-\varphi)|\!\!&\!\!\displaystyle \ge 0, & \quad {\rm on} \ \partial\Omega_\rho.
\end{array}
\end{equation}
By the maximum principle, we derive the mixed tangential-normal derivative estimate
\begin{equation}\label{tangential-normal}
|D_{\alpha n}u(0)| \le C, \quad {\rm for} \ \alpha=1, \cdots, n-1,
\end{equation}
where the constant $C$ depends on $\Omega, A, B, \varphi, \underline u$ and $|u|_{1;\Omega}$.

%\begin{remark}
%We dealt with the above barrier argument in the boundary strip $\Omega_\rho$. We can also use the barrier argument in the neighbourhood of the origin, $\Omega \cap B_{\delta}(0)$, which will be used in the pure normal derivative estimate below.
%\end{remark}

Up to now, from \eqref{pure tangential} and \eqref{tangential-normal}, the following estimates on the boundary are already under control,
\begin{equation}\label{M2 prime estimate}
|D_{ij}u(x)| \le M^\prime_2, \quad{\rm for} \  i+j <2n, \ x \in \partial\Omega,
\end{equation}
where the constant $M^\prime_2$ depends on $\Omega, A, B, \varphi, \underline u$ and $|u|_{1;\Omega}$. In \eqref{M2 prime estimate}, the coordinate systerm is chosen so that the positive axis is directed along the inner normal at the point $x\in \partial\Omega$.

The remaining estimate is the pure normal second order derivative estimate on the boundary. For this estimation, we shall use the idea in \cite{Tru1995}. Since $\Gamma\subset \Gamma_1$, the lower bound for $u_{nn}$ is direct from ${\rm trace}(M[u])>0$ and \eqref{M2 prime estimate}. We need to derive an upper bound for $u_{nn}$ on $\partial\Omega$.

For any boundary point $x\in \partial\Omega$, fixing a principal coordinate system at the point $x$ and a corresponding neighbourhood $\mathcal{N}$ of $x$ with $\gamma_n\in [-1, -1/2)$ in $\mathcal{N}\cap \partial\Omega$, we let $\xi^{(1)}, \cdots, \xi^{(n-1)}$ be an orthogonal vector field on $\mathcal{N}\cap\partial\Omega$, which is tangential to $\mathcal{N}\cap\partial\Omega$, namely $\xi^{(j)}\cdot \gamma =0$ for $j=1, \cdots, n-1$, where $\gamma$ is the unit outer normal vector field on $\partial\Omega$. Note that the vector field $\xi^{(1)}, \cdots, \xi^{(n-1)}$ agrees with the coordinate system at $x$, namely $\xi_\alpha^{(\beta)}(x)=\delta_{\alpha\beta}$, $\alpha, \beta=1,\cdots, n-1$. We introduce the following notations
$$
\nabla_\alpha u=\xi_m^{(\alpha)}D_m u, \quad \nabla_{\alpha\beta}u = \xi_m^{(\alpha)}\xi_l^{(\beta)}D_{ml}u, \quad \mathfrak{C}_{\alpha\beta} =\xi_m^{(\alpha)}\xi_l^{(\beta)}D_m\gamma_l,\quad 1\le \alpha, \beta \le n-1,
$$
$$
\nabla u =(\nabla_1 u, \cdots, \nabla_{n-1}u), \quad D_\gamma u = \gamma_m D_m u,
$$
$$
\mathcal{A}_{\alpha\beta}(\cdot, u, \nabla u, -D_\gamma u)= \xi_m^{(\alpha)}\xi_l^{(\beta)}A_{ml}(\cdot, u, \nabla u, -D_\gamma u),\quad  1\le \alpha, \beta \le n-1,
$$
$$
\omega_{\alpha\beta} =\xi_m^{(\alpha)}\xi_l^{(\beta)}w_{ml}=\xi_m^{(\alpha)}\xi_l^{(\beta)}(D_{ml}u-A_{ml}), \quad 1\le \alpha, \beta \le n-1,
$$
$$
\omega_{-\gamma \alpha}=-\gamma_m \xi_l^{(\alpha)}w_{ml}, \quad \omega_{-\gamma,\alpha} = \{\omega_{-\gamma 1}, \cdots, \omega_{-\gamma (n-1)}\}^T, \quad 1\le \alpha, \beta \le n-1,
$$
and
$$
\nabla^2 u = \{\nabla_{\alpha\beta}u\}_{1\le \alpha,\beta \le n-1}, \quad \mathfrak{C}=\{\mathfrak{C}_{\alpha\beta}\}_{1\le \alpha,\beta \le n-1}, \quad \mathcal{A} =\{\mathcal{A}_{\alpha\beta}\}_{1\le \alpha,\beta \le n-1},
$$
Since  $u=\varphi$ on $\partial\Omega$, we have for $x\in \partial\Omega$,
$$\mathcal{A}_{\alpha\beta}(x, u(x), \nabla u(x), - D_\gamma u(x))=A_{\alpha\beta}(x, \varphi(x), D^\prime \varphi (x), D_n u(x)),$$
for $1\le \alpha, \beta \le n-1$, and 
$$B(x, u(x), \nabla u(x), - D_\gamma u(x))=B(x, \varphi(x), D^\prime \varphi (x), D_n u(x)),$$ 
where $D^\prime = (D_1, \cdots, D_{n-1})$.
The augmented Hessian matrix under the orthogonal vector field $\xi^{(1)}, \cdots, \xi^{(n-1)}$ and $\gamma$, can be written as
\begin{equation}\label{M'[u]}
M[u] := \left (
\begin{array}{cc}
M^\prime [u],          & \omega_{-\gamma, \alpha} \\
\omega_{-\gamma, \alpha}^T, & \omega_{\bar\gamma \bar\gamma}
\end{array}
\right ),
\end{equation}
where $M^\prime[u] := \nabla^2 u-\mathcal{A}$, and $\bar \gamma=-\gamma$.
From the boundary condition $u=\varphi$ on $\partial \Omega$, we have
    \begin{equation}\label{nabla2u-varphi}
        \nabla^2(u-\varphi)=D_\gamma(u-\varphi)\mathfrak{C}
    \end{equation}
on $\partial\Omega$, which agrees with \eqref{twice diff boundary} at $x_0$. We then have, on the boundary $\partial \Omega$,
    \begin{equation}\label{Mprimeu}
        M^\prime [u]
        =\{ D_\gamma(u-\varphi)\mathfrak{C}_{\alpha\beta}+\nabla_{\alpha\beta}\varphi-\mathcal{A}_{\alpha\beta}(x,\varphi, D^\prime \varphi,-D_\gamma u)\}_{\alpha,\beta=1,\cdots,n-1}.
    \end{equation}

For a sufficiently large constant $R$ satisfying $(\lambda^\prime(M^\prime[u]), R)\in \tilde \Gamma$, we define
\begin{equation}\label{f infty}
f_R (\lambda^\prime(M^\prime[u])) := F(\tilde M[u]+ R \gamma\otimes \gamma),
\end{equation}
where $\lambda^\prime=\lambda_x^\prime=(\lambda_1,\cdots, \lambda_{n-1})$ are the eigenvalues of $M^\prime [u]:=\{\omega_{\alpha\beta}\}_{\alpha,\beta=1,\cdots, n-1}$, and 
\begin{equation}
\tilde M[u] := \left (
\begin{array}{cc}
M^\prime [u],          & 0 \\
0, & 0
\end{array}
\right ),
\end{equation}
We now fix a point $x_0\in \partial\Omega$, where the function $g$ defined by
\begin{equation}\label{g function}
g(x):= f_R(\lambda^\prime(M^\prime [u])) - B[u]
\end{equation}
is minimized over $\partial\Omega$, where $M^\prime [u]:=\{\omega_{\alpha\beta}\}_{\alpha,\beta=1,\cdots, n-1}$, $B[u]:=B(\cdot, u, Du)$, $R$ is a sufficiently large constant satisfying $(\lambda^\prime(M^\prime[u]), R)\in \tilde \Gamma$. In order to derive an upper bound for $u_{nn}$ on $\partial\Omega$, we aim to get a positive lower bound for the function $g(x)$ in \eqref{g function} on $\partial\Omega$.

We assume that the function $h$ defined by
\begin{equation}\label{h function}
h(x):= f_R(\lambda^\prime(M^\prime_u[\underline u])) - B_u[\underline u]
\end{equation}
is minimized over $\partial\Omega$ at a point $y\in \partial\Omega$, where $M^\prime_u[\underline u] := \{D_{\alpha\beta} \underline u - A_{\alpha\beta}(\cdot, u, D\underline u)\}_{\alpha,\beta=1,\cdots, n-1}$, and $B_u[\underline u]=B(\cdot, u, D\underline u)$. It is obvious that $h(x_0)\ge h(y)>0$. Since $\underline u=\varphi$ on $\partial\Omega$, similarly to \eqref{nabla2u-varphi} and \eqref{Mprimeu}, we also have
\begin{equation}\label{nabla2underu-varphi}
\nabla^2(\underline u-\varphi)=D_\gamma(\underline u-\varphi)\mathfrak{C}
\end{equation}
on $\partial\Omega$, and
\begin{equation}\label{Mprimeunderu}
\begin{array}{ll}
M_u^\prime [\underline u] \!\!&\!\! := \nabla^2 \underline u-\mathcal{A}(x,u,D\underline u)\\
\!\!&\!\! =\{ D_\gamma(\underline u-\varphi)\mathfrak{C}_{\alpha\beta}+\nabla_{\alpha\beta}\varphi-\mathcal{A}_{\alpha\beta}(x,\varphi, D^\prime \varphi,-D_\gamma \underline u)\}_{\alpha,\beta=1,\cdots,n-1}.
\end{array}
\end{equation}

Similar to \eqref{M'[u]}, we can write a symmetric matrix $r\in \mathbb{S}^n$ as 
    \begin{equation}
	r := \left (
	\begin{array}{cc}
	r^\prime,          & r_{\alpha,n} \\
	r_{\alpha,n}^T, & r_{nn}
	\end{array}
	\right ),
    \end{equation}
where $r^\prime\in \mathbb{S}^{n-1}$, $r_{\alpha, n}= \{r_{1n}, \cdots, r_{(n-1)n}\}^T$. Let $\Gamma^\prime:=\{r^\prime \in \mathbb{S}^{n-1} |\  r\in \Gamma\}$ be the projection cone of the cone $\Gamma\subset \mathbb{S}^{n}$ onto $\mathbb{S}^{n-1}$, and $\tilde \Gamma^\prime$ be the corresponding cone to $\Gamma^\prime$ in $\mathbb{R}^{n-1}$. For any matrix $r^\prime\in \Gamma^\prime$, with eigenvalues $\lambda_1, \cdots, \lambda_{n-1}$, let us now define
    \begin{equation}\label{G function}
    G(r^\prime):= f_R (\lambda_1, \cdots, \lambda_{n-1}),
    \end{equation}
and
    \begin{equation}
    G^{\alpha\beta} = \frac{\partial G}{\partial r^\prime_{\alpha\beta}}, \quad G_{x_0}^{\alpha\beta} = G^{\alpha\beta}(M^\prime[u](x_0)),
    \end{equation}
for $1\le \alpha, \beta \le n-1$.
Since the function $f_R$ is non-decreasing and concave in the cone $\tilde \Gamma^\prime$ from F1 and F2, then the function $G$ is non-decreasing and concave in the cone $\Gamma^\prime$, see \cite{CNS-Hessian, Tru1995}. From \eqref{g function}, we have
    \begin{equation}
    g(x) \ge g(x_0), \quad {\rm for} \ x\in \partial\Omega,
    \end{equation}
namely,
    \begin{equation}\label{f infty - B}
    f_R(\lambda^\prime(M[u](x))) - B[u](x) \ge f_R(\lambda^\prime(M[u](x_0))) - B[u](x_0),
    \end{equation}
on $\partial\Omega$.
From \eqref{G function} and \eqref{f infty - B}, we have, on $\partial\Omega$,
    \begin{equation}\label{G - B}
    G(M^\prime[u](x)) - B[u](x) \ge G(M^\prime[u](x_0)) - B[u](x_0).
    \end{equation}
From the concavity of $G$, we then have, on $\partial\Omega$,
    \begin{equation}\label{starting point 1}
    G^{\alpha\beta}_{x_0} (\omega_{\alpha \beta}(x) - \omega_{\alpha \beta}(x_0)) - B[u](x) + B[u](x_0) \ge 0.
    \end{equation}

We consider the two possible cases:

{\it Case 1.} $g(x_0) \ge h(x_0)/2$. Since this inequality can provide a positive lower bound for $g(x)$, we are done.

{\it Case 2.} $g(x_0)< h(x_0)/2$. By successively using \eqref{g function}, \eqref{h function}, \eqref{G function}, the concavity of $G$, \eqref{Mprimeu} and \eqref{Mprimeunderu}, we have
    \begin{equation}\label{g-h 1}
    \begin{array}{rl}
     \!\!&\!\!\displaystyle  g(x_0)-h(x_0) \\
      = \!\!&\!\!\displaystyle  \left(G(M^\prime [u](x_0))-B[u](x_0)\right) - \left(G(M^\prime_u [\underline u](x_0))-B_u[\underline u](x_0)\right) \\
      \ge \!\!&\!\!\displaystyle  G^{\alpha\beta}_{x_0} \left( \{M^\prime [u](x_0)\}_{\alpha\beta} - \{M^\prime_u [\underline u](x_0)\}_{\alpha\beta}\right ) - B[u](x_0) + B_u[\underline u](x_0)\\
      = \!\!&\!\!\displaystyle  -G^{\alpha\beta}_{x_0} \left [D_n(u-\underline u)(x_0)D_\alpha\gamma_\beta(x_0)+A_{\alpha\beta}(x_0, \varphi(x_0), D'\varphi(x_0), D_nu(x_0))\right.\\
        \!\!&\!\!\displaystyle  \left.-A_{\alpha\beta}(x_0, \varphi(x_0), D' \varphi(x_0),D_n \underline u(x_0))\right ] - B(x_0, \varphi(x_0), D'\varphi(x_0),D_nu(x_0))\\
        \!\!&\!\!\displaystyle  + B(x_0, \varphi(x_0), D' \varphi(x_0),D_n \underline u(x_0)) \\
    \ge \!\!&\!\!\displaystyle  -D_n(u-\underline u)(x_0)\left \{G_{x_0}^{\alpha\beta}D_\alpha\gamma_\beta(x_0)+[G_{x_0}^{\alpha\beta}D_{p_n}A_{\alpha\beta}+ D_{p_n}B](x_0, \varphi(x_0), D'\varphi(x_0),D_nu(x_0))\right \},
    \end{array}
    \end{equation}
where the regularity of $A$ and the convexity of $B$ with respect to $p$ are used in the last inequality. Assume that $\sigma: = h(y)= \min\limits_{x\in \partial\Omega}h(x)$, then $\sigma$ is a positive constant. Since $g(x_0)< h(x_0)/2$, we then have
    \begin{equation}\label{g-h 2}
        g(x_0) - h(x_0) < - \frac{1}{2}\sigma.
    \end{equation}
Since $\underline u$ can be regarded as a strict subsolution near the boundary, we have
    \begin{equation}\label{D n u - underline u}
        0< D_{n}(u-\underline u)(x_0) \le \kappa,
    \end{equation}
for a positive constant $\kappa$ depending on $\sup |D\underline u|$ and $\sup |Du|$. From \eqref{g-h 1}, \eqref{g-h 2} and \eqref{D n u - underline u}, we derive
    \begin{equation}\label{vartheta x0 > 0}
        G_{x_0}^{\alpha\beta}D_\alpha\gamma_\beta(x_0)+[G_{x_0}^{\alpha\beta}D_{p_n}A_{\alpha\beta}+ D_{p_n}B](x_0, \varphi(x_0), D'\varphi(x_0),D_nu(x_0)) \ge \frac{\sigma}{2\kappa}>0.
    \end{equation}
Let
    \begin{equation}\label{vartheta}
        \vartheta(x):=G_{x_0}^{\alpha\beta}\mathfrak{C}_{\alpha\beta}(x)+[G_{x_0}^{\alpha\beta}D_{p_n}\mathcal{A}_{\alpha\beta}+ D_{p_n}B](x,\varphi(x),\nabla \varphi(x),
        -D_\gamma u(x_0)),
    \end{equation}
then from \eqref{vartheta x0 > 0}, we have $\vartheta(x_0)\ge \frac{\sigma}{2\kappa}>0$.
Since $\vartheta(x)$ is smooth near $\partial \Omega$, we can have
    \begin{equation}\label{coefficient>0}
        \vartheta(x)\geq c>0,\ \ {\rm on}  \  \mathcal{N}\cap \partial\Omega,
    \end{equation}
for some small positive constant $c$. From the regularity condition of $A$, we observe that $\mathcal{A}_{\alpha\beta}$ is convex with respect to
$p_n$, for $1\le \alpha, \beta \le n-1$. Therefore, we have
    \begin{equation}\label{Aconvex}
        \begin{array}{ll}
            \!\!&\!\!\mathcal{A}_{\alpha\beta}(x,\varphi(x),\nabla \varphi(x), -D_\gamma
            u(x_0))-\mathcal{A}_{\alpha\beta}(x,\varphi(x),\nabla \varphi(x), -D_\gamma
            u(x))\\
            \leq \!\!&\!\! D_{p_n}\mathcal{A}_{\alpha\beta}(x,\varphi(x),\nabla \varphi(x), -D_\gamma
            u(x_0))(D_\gamma u(x)-D_\gamma
            u(x_0)),
        \end{array}
    \end{equation}
on $\partial\Omega$, for $1\le \alpha, \beta \le n-1$.
From \eqref{nabla2u-varphi}, \eqref{starting point 1}, \eqref{Aconvex} and the convexity of $B$ in $p$, we have, on $\partial\Omega$,
    \begin{equation}\label{D gamma u}
        \begin{array}{ll}
            \!\!&\!\!\displaystyle \vartheta(x) \left[D_\gamma (u-\varphi)(x) - D_{\gamma}(u-\varphi)(x_0)\right]\\
            \ge \!\!&\!\!\displaystyle
            G_{x_0}^{\alpha\beta}\left[D_\gamma(u-\varphi)(x_0)(\mathfrak{C}_{\alpha\beta}(x_0)-\mathfrak{C}_{\alpha\beta}(x))+\nabla_{\alpha\beta}\varphi(x_0)
            -\nabla_{\alpha\beta}\varphi(x)\right.\\
            \!\!&\!\!\displaystyle +\mathcal{A}_{\alpha\beta}(x,\varphi(x),\nabla \varphi(x),-D_\gamma
            u(x_0))-\mathcal{A}_{\alpha\beta}(x_0,\varphi(x_0),\nabla \varphi(x_0),-D_\gamma u(x_0))\\
            \!\!&\!\!\displaystyle \left.+D_{p_n}\mathcal{A}_{\alpha\beta}(x,\varphi(x),\nabla \varphi(x),
            -D_\gamma u(x_0))(D_\gamma \varphi(x_0)-D_\gamma \varphi(x)\right]\\
            \!\!&\!\!\displaystyle+B(x,\varphi(x),\nabla \varphi(x),-D_\gamma u(x_0))-B(x_0,\varphi(x_0),\nabla \varphi(x_0),-D_\gamma u(x_0))\\
            \!\!&\!\!\displaystyle+D_{p_n}B(x,\varphi(x),\nabla \varphi(x),-D_\gamma u(x_0))(D_\gamma \varphi(x_0)-D_\gamma \varphi(x))\\
            := \!\!&\!\! \displaystyle \Theta(x).
        \end{array}
    \end{equation}
By \eqref{coefficient>0} and \eqref{D gamma u}, we have
    \begin{equation}\label{Dnu<Theta}
        D_\gamma (u-\varphi)(x) - D_{\gamma}(u-\varphi)(x_0) \ge \vartheta^{-1}(x) \Theta(x),\ \ {\rm on} \ \mathcal{N}\cap \partial\Omega,
    \end{equation}
namely,
    \begin{equation}
    \begin{array}{ll}
         \!\!&\!\!\displaystyle \gamma_n(x)D_n (u-\varphi)(x) + D_n(u-\varphi)(x_0) \\
    \ge  \!\!&\!\!\displaystyle \vartheta^{-1}(x) \Theta(x) - \sum_{\alpha=1}^{n-1}\gamma_\alpha (x)D_{\alpha}(u-\varphi)(x),\ \ {\rm on} \ \mathcal{N}\cap \partial\Omega.
    \end{array}
    \end{equation}
Since $\gamma_n\in [-1, -1/2)$ in $\mathcal{N}\cap \partial\Omega$, we have
    \begin{equation}\label{D n 1}
    \begin{array}{ll}
         \!\!&\!\!\displaystyle D_n (u-\varphi)(x) + \gamma^{-1}_n(x) D_n(u-\varphi)(x_0) \\
    \le  \!\!&\!\!\displaystyle \gamma^{-1}_n(x)\vartheta^{-1}(x) \Theta(x) - \gamma^{-1}_n(x)\sum_{\alpha=1}^{n-1}\gamma_\alpha (x)D_{\alpha}(u-\varphi)(x),\ \ {\rm on} \ \mathcal{N}\cap \partial\Omega.
    \end{array}
    \end{equation}
From the form of the function $\Theta(x)$ in \eqref{D gamma u}, since $\Theta(x_0)=0$, we have
    \begin{equation}\label{l + cx2}
    \gamma^{-1}_n(x)\vartheta^{-1}(x) \Theta(x) \le \ell(x-x_0) + C|x-x_0|^2,\quad  {\rm on} \ \mathcal{N}\cap \partial\Omega,
    \end{equation}
where $\ell$ is a linear function of $x-x_0$ with $\ell(0)=0$, and the constant $C$ depends on $\Omega, A, B, \varphi$ and $|u|_{1;\Omega}$. Since $-\gamma^{-1}_n\in [1,2)$, $\gamma_\alpha(x_0)=0$ and $D_\alpha(u-\varphi)(x_0)=0$ for $\alpha=1, \cdots, n-1$, we have
    \begin{equation}\label{cx2}
    \begin{array}{ll}
        \!\!&\!\!\displaystyle - \gamma^{-1}_n(x)\sum_{\alpha=1}^{n-1}\gamma_\alpha (x)D_{\alpha}(u-\varphi)(x) \\
    =   \!\!&\!\!\displaystyle - \gamma^{-1}_n(x)\sum_{\alpha=1}^{n-1}[\gamma_\alpha (x)-\gamma_\alpha (x_0)][D_{\alpha}(u-\varphi)(x)-D_{\alpha}(u-\varphi)(x_0)] \\
    \le \!\!&\!\!\displaystyle C|x-x_0|^2, \quad\quad  {\rm on} \ \mathcal{N}\cap \partial\Omega,
    \end{array}
    \end{equation}
where the constant $C$ depends on $\Omega$, $\varphi$ and $M^\prime_2$. From \eqref{D n 1}, \eqref{l + cx2} and \eqref{cx2}, we have
    \begin{equation}\label{2.59}
    v(x):=D_n (u-\varphi)(x) + \gamma^{-1}_n(x) D_n(u-\varphi)(x_0) - \ell(x-x_0) \le C|x-x_0|^2, \quad  {\rm on} \ \mathcal{N}\cap \partial\Omega,
    \end{equation}
where the constant $C$ depends on $\Omega, A, B, \varphi, |u|_{1;\Omega}$ and $M^\prime_2$.
By extending $\varphi$ and $\gamma$ smoothly to the interior near the boundary to be constant in the normal direction, the function $v$ in \eqref{2.59} is extended to $\Omega\cap B_\delta(x_0)$ for some small $\delta$ such that
    \begin{equation}
    B_\delta(x_0) \cap \partial \Omega \subset \mathcal{N}\cap \partial\Omega.
    \end{equation}
By calculations, we have
    \begin{equation}
        |\mathcal{L}v| \leq C(1+\mathscr{T}),\ \ {\rm in} \ \Omega\cap B_\delta(x_0),
    \end{equation}
where the differentiated equation \eqref{LDku} for $k=n$ is used.
Recalling the barrier function $\tilde \eta$ in \eqref{final barrier} with $a \gg b \gg 1$ and $|x|^2$ replaced by $|x-x_0|^2$, we have
    \begin{equation}
    \begin{array}{cl}
    \displaystyle \mathcal{L}\tilde \eta \le -\frac{a\epsilon_1}{2} (1+\mathscr{T}), & \quad {\rm in} \ \Omega\cap B_{\delta}(x_0),\\
    \displaystyle        \tilde \eta  = b|x-x_0|^2, & \quad {\rm on} \ \partial\Omega\cap B_\delta(x_0), \\
    \displaystyle        \tilde \eta  > b\delta^2,  & \quad {\rm on} \ \Omega \cap \partial B_\delta(x_0).
    \end{array}
    \end{equation}
Therefore, for $a\gg b \gg 1$, we have
    \begin{equation}
    \begin{array}{cl}
    \displaystyle \mathcal{L}v \ge \mathcal{L}\tilde \eta, & \quad {\rm in} \ \Omega\cap B_{\delta}(x_0),\\
    \displaystyle            v \le \tilde \eta,            & \quad {\rm on} \ \partial(\Omega\cap B_\delta(x_0)), \\
    \displaystyle            v=\tilde \eta =0,             & \quad {\rm at} \ x_0.
    \end{array}
    \end{equation}
Then the maximum principle leads to
    \begin{equation}
        D_n v\le D_n \tilde \eta,\ \ {\rm at}\ x_0,
    \end{equation}
and hence
    \begin{equation}\label{Dnnu up}
        D_{nn}u(x_0)\le C,
    \end{equation}
where the constant $C$ depends on $\Omega, A, B, \varphi, \underline u, |u|_{1;\Omega}$ and $M^\prime_2$.
Therefore, we have obtained the upper bound for all eigenvalues of $M[u](x_0)$. Then by F3, $\lambda(M[u](x_0))$ is contained in a compact subset of $\tilde \Gamma$.
Hence for sufficiently large $R$, we have
	\begin{equation}
		g(x_0)=\min_{x\in \partial\Omega}g(x) >0.
	\end{equation}

Overall, from cases 1 and 2, we have obtained a positive lower bound $c_0>0$ for the function $g(x)$ defined in \eqref{g function} on $\partial\Omega$.
By Lemma 1.2 in \cite{CNS-Hessian} and using \eqref{M2 prime estimate}, there exists a constant $R_0\ge R$ such that if $w_{\gamma\gamma}(x_0)>R_0$, we have
\begin{equation}\label{f lambda Mu inq}
f(\lambda (M[u](x_0))) \ge f_{w_{\gamma\gamma}(x_0)}(\lambda^\prime (M^\prime [u])(x_0) ) - \frac{c_0}{2},
\end{equation}
where 
\begin{equation}\label{f lambda Mu def}
f(\lambda (M[u])): = F(M[u]). 
\end{equation}
Note that since $F$ is orthogonally invariant, $f_R (\lambda^\prime(M^\prime[u]))$ and $f(\lambda (M[u](x_0)))$ in \eqref{f infty} and \eqref{f lambda Mu def} are well defined.
Then if $w_{\gamma\gamma}(x_0)>R_0\ge R$, from \eqref{f infty}, \eqref{g function}, \eqref{f lambda Mu inq}, \eqref{f lambda Mu def} and $g(x_0)\ge c_0>0$, we have
\begin{equation}
F(M[u](x_0)) - B[u](x_0) \ge \frac{c_0}{2} >0,
\end{equation}
which leads to a contradiction with \eqref{1.1}. Consequently, we have $w_{\gamma\gamma}(x_0) \le R_0$, which leads to
\begin{equation}\label{u gamma gamma x0}
D_{\gamma\gamma}u(x_0)\le C,
\end{equation}
for some constant $C$.

Since $x_0\in \partial\Omega$ is a point where the function $g$ in \eqref{g function} is minimized over $\partial\Omega$, we can repeat the argument from \eqref{f lambda Mu inq} to \eqref{u gamma gamma x0} at any boundary point $x\in \partial\Omega$ to get $D_{\gamma\gamma}u \le C$ on $\partial \Omega$ for some constant $C$.
Then together with the lower bound (from the ellipticity), we finally get the pure normal second derivative estimate on the boundary,
    \begin{equation}\label{pure normal}
        |D_{\gamma\gamma}u|\le C, \quad {\rm on} \ \partial\Omega,
    \end{equation}
where the constant $C$ depends on $\Omega, A, B, \varphi, \underline u, |u|_{1;\Omega}$ and $M^\prime_2$. 

\begin{remark}
	The {\it a priori} pure normal second derivative estimate \eqref{pure normal} on $\partial\Omega$ is treated using the idea in \cite{Tru1995}. The proof in \cite{Tru1995} is divided into the bounded case and the unbounded case, which include concrete examples of the Hessian quotient operator and $k$-Hessian operator respectively. In \cite{Tru1995}, the bounded case is proved by using a limit function, namely replacing $g(x)$ in \eqref{g function} by $g(x):=\lim\limits_{R\rightarrow +\infty} f_R(\lambda^\prime(M^\prime [u])) - B[u]$. In this paper, the estimate \eqref{pure normal} is proved in a uniform package, which is different from \cite{Tru1995}.
\end{remark}

Combining the estimates \eqref{M2 prime estimate} and \eqref{pure normal}, we now have obtained the second derivative bound on the boundary
    \begin{equation}\label{2nd bd on boundary}
        \sup_{\partial\Omega} |D^2 u| \le C,
    \end{equation}
where the constant $C$ depends on $\Omega, A, B, \varphi, \underline u$ and $|u|_{1;\Omega}$.

With the boundary estimate \eqref{2nd bd on boundary}, we can now give the proof of Theorem \ref{Th1.1}.
\begin{proof}[Proof of Theorem \ref{Th1.1}]
Since $\mathcal{F}$ satisfies F1-F3, $A$ is regular, $B$ is convex in $p$, and $\underline u$ is an admissible subsolution, from case (ii) of Theorem 3.1 in \cite{JT-oblique-II}, we have the global second derivative estimate
\begin{equation}\label{global 2nd bound regular A}
  \sup_{\Omega}|D^2u|\le C(1+ \sup_{\partial\Omega}|D^2u|),
\end{equation}
where the constant $C$ depends on $A, B, F, \Omega, \underline u$ and $|u|_{1;\Omega}$. The full second derivative estimate \eqref{full 2nd estimate} then follows from the estimates \eqref{2nd bd on boundary} and \eqref{global 2nd bound regular A}. We now complete the proof of Theorem \ref{Th1.1}.
\end{proof}

\subsection*{A strengthened barrier and its applications}\label{Section 2.1}

To end this section, we shall strengthen the key barrier construction in \eqref{barrier in Lemma 2.1} in Lemma \ref{Lemma 2.1}, which makes the proof of the pure normal derivative estimate on the boundary a bit simpler. This barrier also provides an alternative proof of the mixed tangential-normal derivative estimate on the boundary.
Such a barrier is achieved by using $|\delta u|^2$, where $\delta u$ is the tangential gradient. The idea has already been used in the uniformly elliptic case in \cite{Tru1983}, in the case of curvature equations in \cite{IV1990, LT1994}, and in the case of general fully nonlinear equations on Riemannian manifolds in \cite{Guan2014}.

In the proof of the pure normal second derivative estimate on $\partial\Omega$, immediately after \eqref{D gamma u}, we define
    \begin{equation}\label{aux alt}
        v(x):=\vartheta(x)[D_\gamma (u-\varphi)(x) - D_{\gamma}(u-\varphi)(x_0)]-\Theta(x),
    \end{equation}
where the functions $\vartheta(x)$ and $\Theta(x)$ are defined in \eqref{vartheta} and \eqref{D gamma u}, respectively.
Then by \eqref{D gamma u}, we have
    \begin{equation}\label{v ge 0 boundary}
        v(x) \ge 0, \quad {\rm on}\ \partial\Omega.
    \end{equation}
By extending $\varphi$ and $\gamma$ smoothly to $\bar\Omega$ such that $|\gamma|=1$, using \eqref{LDku} and the orthogonal invariance of $\mathcal{F}$, we have
    \begin{equation}\label{inequality with additional term}
        |\mathcal{L} v|\le C\left (1 + \mathscr{T}^*\right ), \quad {\rm in} \ \Omega,
    \end{equation}
where 
	\begin{equation}\label{T*}
	\mathscr{T}^*=\sum_{i=1}^n F^{ii}(1+|w_{ii}|).
	\end{equation}

Comparing \eqref{inequality with additional term} with the standard inequality of the form \eqref{standard inequality}, there is an additional term $\sum_{i=1}^n F^{ii}|w_{ii}|$. We need to modify the barrier function in Lemma \ref{Lemma 2.1} to derive a barrier inequality which can control the additional term. For this purpose, we assume in addition that $\mathcal{F}$ is orthogonally invariant, and satisfies
\begin{equation}\label{3.053}
	|r\cdot F_r| \le O(1) (\mathscr{T}(r) +  |F(r)|),
\end{equation}
as $|r| \rightarrow \infty$, uniformly for $F(r) >a$, for any $a >a_0$.
The condition \eqref{3.053} is a combination of the conditions (3.24) and (3.54) in \cite{JT-oblique-I}. Note that \eqref{3.053} is satisfied if F2 holds and either $a_0$ is finite or F4 in \cite{JT-oblique-I} holds, (or trivially if $\mathcal{F}$ is homogeneous).
We now formulate the following lemma.

\begin{Lemma}\label{Lemma 2.2}
Under the assumptions of Lemma \ref{Lemma 2.1}, assume also that $\mathcal{F}$ is orthogonally invariant and satisfies \eqref{3.053}. Then there exist positive constants $K$ and $\epsilon_2$, depending on $\Omega, A, B, \underline u$ and $|u|_{1;\Omega}$, such that
	\begin{equation}\label{new barrier}
	\mathcal{L} \left [ e^{K(\underline u-u)} + \frac{\epsilon_2}{2} |\delta u|^2 \right ] \ge \epsilon_2 \left ( 1+ \mathscr{T}^*\right ), \quad {\rm in} \ \Omega,
	\end{equation}
where $\mathcal{L}$ is the linearized operator defined in \eqref{mathcal L def}, $\delta u = Du -(D_\gamma u)\gamma$ denotes the tangential gradient of $u$, and $\mathscr{T}^*$ is defined in \eqref{T*}.
\end{Lemma}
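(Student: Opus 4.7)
My plan is to combine the Lemma \ref{Lemma 2.1} barrier $e^{K(\underline u - u)}$ with a small multiple of the tangential gradient squared, following the template developed in the uniformly elliptic case in \cite{Tru1983}, the curvature equation case in \cite{IV1990, LT1994}, and the Riemannian fully nonlinear case in \cite{Guan2014}. After extending $\gamma$ smoothly to $\bar\Omega$ with $|\gamma|=1$, I would write the tangential projection $P^{kl} = \delta_{kl} - \gamma_k\gamma_l$ so that $|\delta u|^2 = P^{kl}u_ku_l$, and expand $\mathcal{L}(|\delta u|^2/2)$ into a principal quadratic term $F^{ij}P^{kl}u_{ki}u_{lj}$, a third-order term $F^{ij}P^{kl}u_ku_{lij}$, terms linear in $D^2u$ arising from derivatives of $P$, and drift terms produced by the first-order part of $\mathcal{L}$.

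The principal quadratic term is where the new positive quantity $\sum_i F^{ii}|w_{ii}|$ must be produced. Using the orthogonal invariance of $\mathcal{F}$ to diagonalize $F_r$ at a point, this term reduces, modulo absorbable cross contributions, to $\sum_i F^{ii}(1-\gamma_i^2)u_{ii}^2$. Substituting $u_{ii} = w_{ii} + A_{ii}$, applying Cauchy--Schwarz, and invoking the elementary inequality $w^2 \ge |w| - 1/4$ would then deliver $c\sum_i F^{ii}|w_{ii}|$ with some $c>0$, up to an $O(\mathscr{T})$ remainder. The third-order term is handled by invoking the differentiated equation \eqref{LDku} to replace $F^{ij}u_{lij}$ by $\mathcal{L}u_l$ plus the drift $(F^{ij}A_{ij}^m - D_{p_m}B)u_{lm}$: the first is $O(1+\mathscr{T})$, while the second, after contraction with the bounded factor $P^{kl}u_k$, is controlled via \eqref{3.053} since $F(M[u]) = B[u]$ is a priori bounded, so $|r\cdot F_r| \le O(1)(\mathscr{T}+|F|) = O(1+\mathscr{T})$, dominating the cross term by a fraction of $\mathscr{T}^*$. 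The remaining contributions from derivatives of $P$ and the first-order part of $\mathcal{L}$ acting on $|\delta u|^2$ are directly $O(1+\mathscr{T})$, since $|Du|$ is bounded and $\gamma$ is smooth.

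Adding $(\epsilon_2/2)$ times this computation to Lemma \ref{Lemma 2.1}'s estimate $\mathcal{L}[e^{K(\underline u-u)}] \ge \epsilon_1(1+\mathscr{T})$, choosing $K$ large enough to absorb the $O(1+\mathscr{T})$ errors and $\epsilon_2$ small enough to preserve the new positive term, yields $\mathcal{L}[e^{K(\underline u-u)} + (\epsilon_2/2)|\delta u|^2] \ge \epsilon_2(1+\mathscr{T}^*)$, proving \eqref{new barrier}. I expect the main obstacle to lie in the principal quadratic step: the projection $P^{kl}$ annihilates the $F^{ii}u_{ii}^2$ contribution in the direction aligned with $\gamma$, so one must verify that the surviving $n-1$ directions, together with the admissibility $M[u]\in\Gamma\subset\Gamma_1$, suffice to recover a uniformly positive multiple of $\sum_i F^{ii}|w_{ii}|$ regardless of the orientation of $\gamma$ relative to the eigenframe of $M[u]$. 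The role of hypothesis \eqref{3.053} is precisely to prevent the drift terms from the lower-order part of $\mathcal{L}$ from overwhelming this recovered quantity, which is the mechanism by which $\mathscr{T}$ is upgraded to $\mathscr{T}^*$ on the right-hand side.
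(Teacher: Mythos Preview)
Your overall architecture is correct, but you have misplaced the role of hypothesis \eqref{3.053}, and this creates a genuine gap at exactly the obstacle you yourself identify. The drift contribution $(F^{ij}A_{ij}^m-D_{p_m}B)u_{lm}$ that you extract from the third-order term is not where \eqref{3.053} is needed: when you also account for the first-order part of $\mathcal{L}$ acting on $\tfrac12|\delta u|^2$, that same expression reappears with the opposite sign and the two cancel exactly, leaving only $\delta_k u\,\mathcal{L}u_k$, which is $O(1+\mathscr{T})$ by \eqref{LDku}. (Your assertion that ``the first-order part of $\mathcal{L}$ acting on $|\delta u|^2$ is directly $O(1+\mathscr{T})$'' is therefore also incorrect as stated --- it contains a $D^2u$ contribution --- but that contribution is precisely what cancels the drift.) So \eqref{3.053} does no work at that stage.

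Where \eqref{3.053} is actually required is in the principal quadratic step, to recover the eigendirection aligned with $\gamma$. After diagonalising $M[u]$ you are left with $\sum_i F^{ii}(1-\gamma_i^2)w_{ii}^2$, and the coefficient $1-\gamma_k^2$ vanishes (or is uncontrollably small) for the index $k$ with $\gamma_k^2=\max_i\gamma_i^2$. Admissibility in $\Gamma_1$ gives only $\sum_i w_{ii}>0$ and cannot bound $F^{kk}|w_{kk}|$ in terms of the surviving directions. The paper's device is the elementary reverse/forward triangle inequality
\[
\sum_{i} F^{ii}|w_{ii}| \le \Big|\sum_i F^{ii}w_{ii}\Big| + 2\sum_{i\neq k} F^{ii}|w_{ii}|,
\]
after which \eqref{3.053} bounds the first term by $\mu(1+\mathscr{T}+|B|)$, and Cauchy's inequality bounds the second by $\epsilon\sum_{i\neq k}F^{ii}w_{ii}^2 + C_\epsilon\mathscr{T}$. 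Since $1-\gamma_i^2\ge (n-1)/n$ for $i\neq k$, this last sum is controlled by the principal quadratic term, and one obtains $\sum_i F^{ii}(1-\gamma_i^2)w_{ii}^2 \ge \tfrac{1}{\epsilon}\sum_i F^{ii}|w_{ii}| - C_\epsilon(1+\mathscr{T})$. Your proposal lacks this recovery mechanism, so as written it would not close; once you insert it in place of the $\Gamma_1$ argument and drop the spurious use of \eqref{3.053} on the drift terms, the rest of your outline goes through.
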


Here the unit vector field $\gamma$ in $\Omega$ in Lemma \ref{Lemma 2.2} is extended smoothly from the unit normal vector $\gamma$ on $\partial\Omega$.

\begin{proof}[Proof of Lemma \ref{Lemma 2.2}.]
In view of the estimate \eqref{barrier in Lemma 2.1} in Lemma \ref{Lemma 2.1}, we only need to estimate $\frac{1}{2}\mathcal{L}|\delta u|^2$. By calculations, we have
    \begin{equation}\label{L delta u}
    \begin{array}{rl}
          \displaystyle\frac{1}{2}\mathcal{L}|\delta u|^2 =  \!\!&\!\!\displaystyle F^{ij} \left ( a_{kl}u_{ik}u_{jl} + \tilde \beta_{ik}u_{jk} \right ) + \delta_ku \mathcal{L}u_k - u_ku_l \left[ \gamma_l \mathcal{L}\gamma_k + F^{ij}(D_i\gamma_k) (D_j \gamma_l)\right ]\\
       \ge \!\!&\!\!\displaystyle F^{ij} \left ( a_{kl}u_{ik}u_{jl} + \tilde \beta_{ik}u_{jk} \right )  - C(1+\mathscr{T}),
    \end{array}
    \end{equation}
where $a_{kl}=\delta_{kl}-\gamma_k\gamma_l$, $\tilde \beta_{ik}=-2u_l(\gamma_kD_i\gamma_l + \gamma_l D_i\gamma_k)$, $C$ is a constant depending on $\Omega, A, B, |u|_{1;\Omega}$ and $|\gamma|_{1;\Omega}$, and \eqref{LDku} is used to obtain the inequality. Note that the estimate \eqref{L delta u} can also be obtained directly from (3.8) in \cite{JT-oblique-I}. At any fixed point $x\in \Omega$, by choosing coordinates so that $M[u]=\{w_{ij}\}$ is diagonal at the point $x$. From the orthogonal invariance of $\mathcal{F}$, we can estimate the first term on the right hand side of \eqref{L delta u},
    \begin{equation}\label{3.051}
        \begin{array}{rl}
                \!\!&\!\!\displaystyle F^{ij} \left ( a_{kl}u_{ik}u_{jl} + \tilde \beta_{ik}u_{jk} \right )  \\
            =   \!\!&\!\!\displaystyle F^{ij} \left [ a_{kl}(w_{ik}+A_{ik})(w_{jl}+A_{jl}) + \tilde \beta_{ik}(w_{jk}+A_{jk}) \right ] \\ 
            =   \!\!&\!\!\displaystyle F^{ij} \left [ a_{kl} w_{ik}w_{jl} + (\tilde\beta_{ik} + 2a_{kl}A_{il})w_{jk} + (a_{kl}A_{ik}A_{jl} + \tilde \beta_{ik}A_{jk})\right ] \\
                \ge \!\!&\!\! \displaystyle F^{ii}(1-\gamma_i^2) w_{ii}^2 - C\left (\mathscr{T} + F^{ii}|w_{ii}| \right ),
        \end{array}
    \end{equation}
where the  constant $C$ depends on $\Omega, A, |u|_{1;\Omega}$ and $|\gamma|_{1;\Omega}$.
Since $\gamma$ is a unit vector field, we can fix $k$ so that $\gamma_k^2 = \max\limits_i \gamma_i^2\ge \frac{1}{n}$. Then we have
	\begin{equation}\label{1-gammai2}
	1-\gamma_i^2\ge \frac{n-1}{n}, \quad {\rm for} \ i\neq k.
	\end{equation}
By successively using the reverse triangle inequality and the triangle inequality, we have 
	\begin{equation}\label{3.052}
	\begin{array}{rl}
	\displaystyle |\sum_{i=1}^n F^{ii}w_{ii}| = \!\!&\!\!\displaystyle |F^{kk}w_{kk}+\sum_{i\neq k} F^{ii}w_{ii}| \\
	                                                            \ge\!\!&\!\!\displaystyle |F^{kk}w_{kk}|-|\sum_{i\neq k} F^{ii}w_{ii}| \\
	                                                            \ge\!\!&\!\!\displaystyle F^{kk}|w_{kk}|-\sum_{i\neq k} F^{ii}|w_{ii}| \\
	                                                              = \!\!&\!\!\displaystyle \sum_{i=1}^nF^{ii}|w_{ii}| - 2 \sum_{i\neq k} F^{ii}|w_{ii}|.
	\end{array}
	\end{equation}
From \eqref{3.053}, \eqref{1-gammai2}, \eqref{3.052} and Cauchy's inequality, we have
    \begin{equation}\label{3.054}
        \begin{array}{rl}
           \displaystyle \sum_{i=1}^nF^{ii}|w_{ii}| \!\!&\!\!\displaystyle \le \frac{(n-1)\epsilon}{n}\sum_{i\neq k}F^{ii}w_{ii}^2 +\frac{n}{(n-1)\epsilon}\mathscr{T}+ \mu (1+\mathscr{T}+|F(M[u])|) \\
           \!\!&\!\!\displaystyle \le \epsilon \sum_{i\neq k}F^{ii}(1-\gamma_i^2) w_{ii}^2 + \frac{2}{\epsilon}\mathscr{T}+ \mu (1+\mathscr{T}+|B|) \\
           \!\!&\!\!\displaystyle \le \epsilon \sum_{i=1}^nF^{ii}(1-\gamma_i^2) w_{ii}^2 + \frac{2}{\epsilon}\mathscr{T}+ \mu (1+\mathscr{T}+|B|) ,
        \end{array}
    \end{equation}
for any constant $\epsilon>0$, and some positive constant $\mu$. Namely, 
	\begin{equation}\label{3.054'}
	\sum_{i=1}^nF^{ii}(1-\gamma_i^2) w_{ii}^2 \ge \frac{1}{\epsilon} \sum_{i=1}^nF^{ii}|w_{ii}| - \frac{1}{\epsilon} \left[\mu +(\mu+\frac{2}{\epsilon})\mathscr{T}+ \mu|B|\right ]
	\end{equation}
holds for any constant $\epsilon>0$.
Combining \eqref{L delta u}, \eqref{3.051} and \eqref{3.054'}, we have
    \begin{equation}\label{3.055}
        \frac{1}{2}\mathcal{L}|\delta u|^2 \ge  \left( \frac{1}{\epsilon} - C \right)\sum_{i=1}^nF^{ii}|w_{ii}| - C (\mathscr{T}+1) - \frac{1}{\epsilon} \left[\mu +(\mu+\frac{2}{\epsilon})\mathscr{T}+ \mu|B|\right ],
    \end{equation}
for any constant $\epsilon>0$, where $C$ is a further constant depending on $\Omega, A, B, |u|_{1;\Omega}$ and $|\gamma|_{1;\Omega}$. Using \eqref{barrier in Lemma 2.1} in Lemma \ref{Lemma 2.1} and \eqref{3.055}, we have
	\begin{equation}\label{3.056}
	\begin{array}{rl}
	     \!\!&\!\! \displaystyle \mathcal{L} \left [ e^{K(\underline u-u)} + \frac{\epsilon_2}{2} |\delta u|^2 \right ] \\
	\ge\!\!&\!\! \displaystyle \epsilon_1(1+\mathscr{T}) + \epsilon_2  \left( \frac{1}{\epsilon} - C\right)\sum_{i=1}^nF^{ii}|w_{ii}| - C \epsilon_2 (\mathscr{T}+1) - \frac{\epsilon_2}{\epsilon} \left[\mu +(\mu+\frac{2}{\epsilon})\mathscr{T}+ \mu|B|\right ] \\
	\ge\!\!&\!\! \displaystyle (\epsilon_1-\epsilon_2 C^\prime) (1+\mathscr{T}) + \epsilon_2 \sum_{i=1}^nF^{ii}|w_{ii}|, 
	\end{array}
	\end{equation}
by fixing $\epsilon=\frac{1}{1+C}$, where the constant $C^\prime$ depends on $\mu, \Omega, A, B, |u|_{1;\Omega}$ and $|\gamma|_{1;\Omega}$.
By choosing $\epsilon_2 =\frac{\epsilon_1}{2\max\{C^\prime, 1\}}$ in \eqref{3.056}, we get the desired estimate \eqref{new barrier} and complete the proof of Lemma \ref{Lemma 2.2}.
\end{proof}

To apply Lemma \ref{Lemma 2.2} for the pure normal second derivative estimate on $\partial\Omega$, we need to make a slight modification of the function in \eqref{new barrier}. Let
	\begin{equation}\label{Phi def}
	\Phi := e^{K(\underline u-u)} -1+ \frac{\epsilon_2}{2} |\delta (u-\underline u)|^2,
	\end{equation}
where the constants $K$ and $\epsilon_2$ are the same as in \eqref{new barrier}. By directly using (3.8) in \cite{JT-oblique-I}, we can also obtain an estiamte
	\begin{equation}
	\frac{1}{2}\mathcal{L}|\delta(u-\underline u)|^2 \ge F^{ij} \left ( a_{kl}u_{ik}u_{jl} + \tilde \beta_{ik}u_{jk} \right )  - C(1+\mathscr{T}),
	\end{equation}
where $a_{kl}$ and $\tilde \beta_{jk}$ are the same as in \eqref{L delta u}, $C$ is a further constant depending on $\Omega, A, B, |u|_{1;\Omega}$ and $|\gamma|_{1;\Omega}$.
Therefore, following the steps in the proof of \eqref{new barrier}, it is readily checked that
	\begin{equation}\label{Phi in}
	\mathcal{L}\Phi \ge \epsilon_2 \left ( 1+ \mathscr{T}^*\right ), \quad {\rm in} \ \Omega.
	\end{equation}
Moreover, it is obvious that
	\begin{equation}\label{Phi on}
	\Phi =0, \quad {\rm on} \ \partial\Omega.
	\end{equation}	
From \eqref{v ge 0 boundary}, \eqref{inequality with additional term}, \eqref{Phi in} and \eqref{Phi on}, we have 
	\begin{equation}
         \begin{array}{rl}
          \mathcal{L}(v-\tau \Phi) \le 0, & \quad {\rm in} \ \Omega,\\
          v-\tau \Phi \ge 0, & \quad {\rm on} \ \partial\Omega,
         \end{array}
	\end{equation}
for sufficiently large positive constant $\tau$, which leads to
    \begin{equation}
       v - \tau\Phi\ge 0, \quad {\rm in} \ \Omega.
    \end{equation}
Since $v - \tau\Phi= 0$ at $x_0\in \partial\Omega$, we have
   \begin{equation}\label{Dnv alt}
   D_n v(x_0)\ge \tau D_n \Phi(x_0)\ge - C, 
   \end{equation}
where the constant $C$ depends on $\Omega, A, B, \underline u, |u|_{1;\Omega}$ and $M_2^\prime$.
Using \eqref{aux alt} and $\vartheta(x_0)\ge \frac{\sigma}{2\kappa}>0$, we have from \eqref{Dnv alt} that
    \begin{equation}\label{Dnnu alt}
        D_{nn}u(x_0) \le C,
    \end{equation}
where the constant $C$ depends on $\Omega, A, B, \underline u, |u|_{1;\Omega}$ and $M_2^\prime$. Note that $\underline u$ in the last term of \eqref{Phi def} can be replaced by $\varphi$, in this case the constant $C$ in \eqref{Dnnu alt} depends also on $\varphi$. We are now in the same position as \eqref{Dnnu up}. We shall omit the rest of the proof for the pure normal derivative estimate on $\partial\Omega$, since it is the same as the previous argument.

We remark that once the barrier \eqref{new barrier} or \eqref{Phi def} is constructed, for the pure normal derivative bound on $\partial\Omega$, we do not need to make detailed local analysis from \eqref{Dnu<Theta} to \eqref{2.59}. In this sense, using such a barrier \eqref{new barrier} or \eqref{Phi def} is a bit simpler and more direct than using the previous barrier \eqref{final barrier} in the course of pure normal derivative estimate.

Next, we show an alternative proof of the mixed tangential-normal derivative estimate on $\partial\Omega$, which is immediate from the barrier \eqref{new barrier} or \eqref{Phi def}.
By F1, we know that $\mathscr{T}^*>0$. Then for the function $\Phi$ in \eqref{Phi def}, from \eqref{Phi in} we have $\mathcal{L}\Phi>0$ in $\Omega$. By the maximum principle, we have
	\begin{equation}
		\Phi\le 0, \ {\rm in} \ \Omega, \quad {\rm and} \ \  \Phi=0,\  {\rm on} \  \partial\Omega,
	\end{equation}
which leads to
	\begin{equation*}
	\begin{array}{cl}
		\pm \delta_i(u-\underline u) \le \sqrt{2 \left(1-e^{K(\underline u-u)}\right) / \epsilon_2}, & \quad {\rm in} \ \Omega, \\
		\pm \delta_i(u-\underline u) =    0, & \quad {\rm on} \  \partial\Omega,
	\end{array}
	\end{equation*}
for $i=1, \cdots, n$. Then we have
	\begin{equation}\label{mixed est using delta}
		\pm D_\gamma \delta_i (u-\underline u) \ge D_\gamma \sqrt{2 \left(1-e^{K(\underline u-u)}\right) / \epsilon_2},\quad {\rm on} \  \partial\Omega,
	\end{equation}
for $i=1, \cdots, n$, where $\gamma$ is the unit outer normal vector field on $\partial\Omega$. Hence, from \eqref{mixed est using delta} we have
	\begin{equation}\label{mixed deri est}
		|D_{\tau\gamma}u|\le C,\quad {\rm on} \  \partial\Omega,
	\end{equation}
for any unit tangential vector field $\tau$ on $\partial\Omega$.

\begin{remark}
Under the additional assumptions that $\mathcal{F}$ is orthogonally invariant and \eqref{3.053} holds, we derive the strengthened barrier inequality \eqref{new barrier}, and further provide alternative proofs of the mixed tangential-normal derivatives and the pure normal derivatives on $\partial\Omega$.  When F2 holds and $a_0$ is finite, condition \eqref{3.053} is automatically satisfied, (see (1.10) in \cite{JT-oblique-I}).
Note that in Theorem \ref{Th1.1}, we already assumed that $\mathcal{F}$ is orthogonally invariant and F2 holds. Therefore, when $a_0$ is finite, the pure normal derivative estimate \eqref{Dnnu alt} and the mixed tangential-normal derivative estimate \eqref{mixed deri est} can be used directly to obtain the full second order derivative estimate \eqref{full 2nd estimate} in Theorem \ref{Th1.1}.
\end{remark}

\begin{remark}
Note that in the Riemannian manifold case, we would encounter this type of estimate \eqref{inequality with additional term} in the course of estimating the mixed tangential-normal derivatives and the pure normal derivatives on the boundary, where the additional term $\sum_{i=1}^n F_{ii}|w_{ii}|$ can not be avoided. Therefore, such kind of barrier in \eqref{new barrier} is useful in the mixed tangential-normal derivative estimate and the pure normal derivative estimate on the boundary for the Riemannian manifold case. For the Dirichlet problem \eqref{1.1}-\eqref{1.2} on Riemannian manifold, we refer the reader to \cite{Guan2014, GJ2015, GJ2016} for more detailed discussions.
\end{remark}

%%%%%%%%%%%%%%%%%%%%%%%%%%%%%%%%%%%%%%%%%%%%%%%%%%%%%%%%%%%%%%%%%%%%%%%%%%%%%%%%%%%%%%%%%%%%%%%%%%%%%%%%%%%%%%%%%%%%%%%%%%%%%%%%%%%%%%%%%%%%%%%%%
\section{Gradient estimates and existence theorem}\label{Section 3}
\vskip10pt

In this section, we discuss the gradient estimates for admissible solutions under appropriate growth conditions of $A$ and $B$ with respect to $p$, and then combine all the derivative estimates to prove the existence result, Theorem \ref{Th1.2}. The alternative existence results in Corollary \ref{Cor 3.1} is properly explained.

When $\mathcal{F}$ satisfies F7, we have the global gradient estimate under the growth conditions \eqref{structure condition 1} and \eqref{structure condition 2} for $A$ and $B$.

\begin{Theorem}\label{Th3.1}
Assume that $\mathcal{F}$ is orthogonally invariant satisfying F1, F3 and F7, $A, B\in C^1(\bar \Omega\times \mathbb{R}\times \mathbb{R}^n)$ satisfying \eqref{structure condition 1} and \eqref{structure condition 2}, $B>a_0$, $u\in C^3(\Omega)\cap C^2(\bar \Omega)$ is an admissible solution of equation \eqref{1.1}.
Then we have the gradient estimate 
\begin{equation}\label{global 1nd bound regular A}
\sup_{\Omega}|Du| \le C(1+\sup_{\partial\Omega}|Du|),
\end{equation}
where the constant $C$ depends on $F, A, B, \Omega$ and $|u|_{0;\Omega}$.
\end{Theorem}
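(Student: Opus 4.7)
The plan is to follow the interior maximum principle argument of case (ii) of Theorem 1.3 in \cite{JT-oblique-I}. The boundary condition plays no role in that argument beyond localizing the maximum; in the oblique case the authors needed an additional step to rule out the maximum lying on $\partial\Omega$, whereas under Dirichlet data we simply accept that the maximum may be on the boundary, which contributes the term $\sup_{\partial\Omega}|Du|$ in \eqref{global 1nd bound regular A}.

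First I would introduce an auxiliary function of the form
$$ w := \tfrac{1}{2}\log(1+|Du|^2) + \eta(u), $$
where $\eta$ is a suitably convex exponential in $u$ to be chosen, and suppose that $w$ attains its maximum on $\bar\Omega$ at an interior point $x_0 \in \Omega$ (otherwise we are already done). Using the orthogonal invariance of $\mathcal F$, I would rotate coordinates at $x_0$ so that $Du(x_0)$ points along $e_1$ and $M[u](x_0)$ is diagonal with eigenvalues $\lambda_1,\ldots,\lambda_n$; then $F_r$ is also diagonal, with entries $F^{ii}$. The critical point condition $Dw(x_0)=0$ yields the identity $u_{1i}/(1+|Du|^2) = -\eta'(u)u_i$, and the differentiated equation \eqref{LDku} lets me expand $\mathcal{L}w(x_0)$ as a sum of three contributions: (a) a nonnegative quadratic form $\sum_i F^{ii}(u_{1i})^2/(1+|Du|^2)^2$ from the $|Du|^2$ factor; (b) the coercive term $\eta''(u)F^{ii}u_i u_i \ge \eta''(u)|Du|^2 F^{11}$ plus the piece $\eta'(u)\mathcal{L}u$ controlled via the equation; and (c) the first-order contributions from differentiating $A$ and $B$, which by \eqref{structure condition 1}-\eqref{structure condition 2} are respectively of orders $o(|Du|^2)\mathscr T$ and $o(|Du|^4)$.

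The decisive step is the application of F7, invoked precisely in the directions where $\lambda_i < 0$: along any such eigenvector the coefficient $F^{ii}$ is bounded below by $\delta_0 + \delta_1\mathscr T$. Combined with the rotation that aligns $e_1$ with $Du(x_0)$, this produces enough positivity in (a) and (b) to absorb the negative contributions in (c) once $|Du(x_0)|$ is sufficiently large. In the complementary situation where every $\lambda_i \ge 0$, the hypothesis $\Gamma \subset \Gamma_1$ together with F1 and F3 ensures that $F^{11}$ is of the same order as $\mathscr T$, and the coercive term in (b) suffices. Exploiting the sign $\mathcal L w(x_0) \le 0$, the ``little-$o$'' nature of the structure conditions, and a sufficiently large choice of $\eta''$, I arrive at an inequality of the schematic form
$$ 0 \le \mathcal L w(x_0) \le -c\,|Du(x_0)|^2\,\mathscr T(x_0) + o\!\bigl(|Du(x_0)|^2\bigr)\mathscr T(x_0) + o\!\bigl(|Du(x_0)|^4\bigr), $$
which forces an absolute bound on $|Du(x_0)|$ and yields \eqref{global 1nd bound regular A}.

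The main obstacle, already present in \cite{JT-oblique-I}, is the bookkeeping in the mixed-sign eigenvalue case: one must simultaneously dominate terms of order $|Du|^2\mathscr T$ (unreachable without F7) and the third-derivative cross terms produced by differentiating \eqref{1.1}, without introducing any uncontrolled positive contribution that destroys the final absorption. The flexibility in choosing $\eta$ and the strength of the F7 lower bound $\delta_0 + \delta_1 \mathscr T$ along the negative-eigenvalue directions are exactly what is required to close the argument in the absence of F2.
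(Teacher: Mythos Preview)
Your overall strategy is the paper's: it simply cites the interior maximum-principle argument from case (ii) of Theorem~1.3 in \cite{JT-oblique-I} (with $g=|Du|^2$), noting that when the maximum falls on $\partial\Omega$ one picks up the boundary term in \eqref{global 1nd bound regular A}. So the route is right, but two of the technical steps in your sketch do not hold as written.

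First, you cannot rotate so that $Du(x_0)$ lies along $e_1$ \emph{and} $M[u](x_0)$ is diagonal. Aligning $Du$ with $e_1$ and using $D_iw(x_0)=0$ gives $u_{1i}(x_0)=0$ for $i>1$, but $M[u]_{1i}=u_{1i}-A_{1i}=-A_{1i}$ need not vanish, so $F_r$ is not diagonal in those coordinates. The way the argument actually runs (compare the proof of Theorem~\ref{Th3.2}) is: with $Du$ along $e_1$, the critical-point relation yields $u_{11}\approx -\eta'(1+|Du|^2)$, hence $w_{11}\le -c|Du|^2$ for large $|Du|$ by the first condition in \eqref{structure condition 1}; since $w_{11}=e_1^TM[u]e_1\ge\lambda_{\min}(M[u])$, the minimum eigenvalue is forced very negative. \emph{Then} one passes to eigenvector coordinates of $M[u]$ and invokes F7 in that frame.

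Second, and relatedly, your case split is misleading. The ``complementary situation where every $\lambda_i\ge 0$'' simply does not occur once $|Du(x_0)|$ is large, by the previous paragraph; and your justification for that case is wrong in any event---$\Gamma\subset\Gamma_1$ together with F1, F3 does \emph{not} imply $F^{11}\sim\mathscr T$. The whole point of assuming F7 rather than F2 here is that the needed lower bound on the relevant $F^{ii}$ comes from the existence of a large negative eigenvalue, which the critical-point equation manufactures. There is no separate positive-eigenvalue regime to handle.

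Finally, your displayed conclusion has the sign backwards: at an interior maximum one has $\mathcal L w(x_0)\le 0$ (as you say in words), so the schematic inequality should read $0\ge \mathcal L w(x_0)\ge \ldots$, not $0\le\mathcal L w(x_0)\le\ldots$.
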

The proof of Theorem \ref{Th3.1} here follows directly from the proof of Theorem 1.3(ii) in Section 3 of \cite{JT-oblique-I} and the remark of the case when discarding the boundary condition in Remark 3.1 in \cite{JT-oblique-I}. However it should be noted that our condition \eqref{structure condition 1} is written more generally than the corresponding conditions (3.31) and (3.33) in Remark 3.1(ii') in \cite{JT-oblique-I}. The replacement of (3.31) by the last two inequalities in  \eqref{structure condition 1} is immediate from (3.32) in \cite{JT-oblique-I} while the replacement of (3.33) by the corresponding inequality in
\eqref{structure condition 1} follows by examination the derivation of (3.42), in the case $g=|Du|^2$, and is readily seen by multiplying through inequality (3.38), (in the general case), by $u_i-\varphi\nu_i$.

 If we replace F7 by F2 in Theorem \ref{Th3.1} , we still need to assume, when $B$ is unbounded,
condition F5 in \cite{JT-oblique-I} with $b=\infty$, that is
\begin{itemize}
 \item[{\bf F5($\infty$)}:] 
 For a given constant $a>a_0$, there exists a constant $\delta_0>0$ such that $\mathscr{T}(r) \ge \delta_0$ if $a< F(r)$,
\end{itemize}
which is implied by F7. 

We also need some control from below on $r\cdot F_r$, as in condition (3.54) in \cite{JT-oblique-I}, namely
\begin{equation}\label{3.1}
r\cdot F_r \ge o(|\lambda_0(r)|) \mathscr{T}(r),
\end{equation}
as $\lambda_0(r) \rightarrow -\infty$, uniformly for $F(r) >a$, for any $a >a_0$, where $\lambda_0(r)$ denotes the minimum eigenvalue of $r$. Note that if F1-F3 hold with $a_0$ finite,  we have $r\cdot F_r\ge 0$, so \eqref{3.1} is trivially satisfied. 

We then have as an alternative to Theorem \ref{Th3.1},  
\begin{Theorem}\label{Th3.2}
Assume that $\mathcal{F}$ is orthogonally invariant satisfying F1-F3, F5($\infty$) and \eqref{3.1}, $A, B\in C^1(\bar \Omega\times \mathbb{R}\times \mathbb{R}^n)$ satisfying \eqref{structure condition 1}  and \eqref{structure condition 2}, with ``O'' replaced by ``o'' in \eqref{structure condition 1}, $B>a_0$, $u\in C^3(\Omega)\cap C^2(\bar \Omega)$ is an admissible solution of equation \eqref{1.1}.
Then we have the gradient estimate \eqref {global 1nd bound regular A}, where the constant $C$ depends on $F, A, B, \Omega$ and $|u|_{0;\Omega}$.
\end{Theorem}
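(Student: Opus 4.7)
The plan is to adapt the proof of case (ii) of Theorem 1.3 in \cite{JT-oblique-I}, which in its original form relied on F7 to give the pointwise bound $F^{ij}\xi_i\xi_j \ge \delta_0 + \delta_1\mathscr{T}$ along unit eigenvectors corresponding to negative eigenvalues of $M[u]$. Since F7 is no longer available, the main task is to substitute its role by a combination of concavity (F2), the trace lower bound F5($\infty$), and the one-sided growth \eqref{3.1}, together with the stronger $o$-type decay in \eqref{structure condition 1}.

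First I would set up the standard auxiliary function $g = \tfrac12 |Du|^2 e^{\alpha(u-\underline u)}$ (or the simpler $g = \tfrac12|Du|^2 + \phi(u)$ when no subsolution is needed for the global bound), and examine an interior maximum point $x_0$ of $g$. Applying the linearized operator $\mathcal{L}$ defined in \eqref{mathcal L def}, using the differentiated equation \eqref{LDku} twice and the first-order optimality $Dg(x_0)=0$, yields a schematic inequality of the form
$$
0 \;\ge\; F^{ij}u_{ik}u_{jk} + \alpha \mathcal{L}u \cdot |Du|^2 + (\text{good terms from concavity}) - (\text{bad lower-order terms}),
$$
where the bad terms collect the contributions from $D_xA,D_zA,D_xB,D_zB$, from $D_pA, D_pB$ multiplied by $|Du|$, and from the coefficients $A_{ij}^k$ entering $\mathcal{L}$.

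Next I would exploit F2 through the inequality $F^{ij}(M[u])_{ij} \ge F(M[u]) - F(0) \ge B - F(0)$ to keep $F^{ij}(u_{ij}-A_{ij})$ bounded below, and invoke F5($\infty$) together with $B>a_0$ to obtain $\mathscr{T}(M[u]) \ge \delta_0 > 0$. To handle the term $F^{ij}u_{ik}u_{jk}$ when $M[u]$ has a very negative eigenvalue $\lambda_0(M[u])$, I would diagonalise at $x_0$ and separate the contribution of that eigenvalue, bounding it by $r\cdot F_r$ and then invoking \eqref{3.1} to absorb it into $o(|\lambda_0|)\mathscr{T}$. The strengthened structure conditions, in which every ``$O$'' in \eqref{structure condition 1} is replaced by ``$o$'' and \eqref{structure condition 2} already contains ``$o$'', ensure that all bad terms are of the form $o(|Du|^2)(1+\mathscr{T})$ at $x_0$, while the constant good terms give a positive multiple of $\mathscr{T}$ and the $\alpha$-term provides a definite positive multiple of $|Du|^2 \mathscr{T}$ (via $F^{ij}u_{ij} = B + F^{ij}A_{ij}$ and F5($\infty$)). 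Choosing $\alpha$ large and $|Du(x_0)|$ sufficiently large then forces a contradiction, giving the desired gradient bound up to the boundary values.

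The main obstacle is the regime where $\lambda_0(M[u](x_0)) \to -\infty$ at the maximum, since without F7 one loses direct coercivity of $F^{ij}\xi_i\xi_j$ along the negative eigendirection. This is precisely what \eqref{3.1} is designed to control, but its $o(|\lambda_0|)$ factor has to be balanced carefully against the $o(|Du|^2)$ decay of the bad first-order terms, and this is where the ``$O$''-to-``$o$'' strengthening in \eqref{structure condition 1} is essential. When $a_0$ is finite we have $r\cdot F_r \ge 0$ automatically and F5($\infty$) is implied by F1 and F3, so the truly delicate case is $a_0 = -\infty$; in that case the proof proceeds exactly as in Section 3 of \cite{JT-oblique-I} with the replacements indicated above.
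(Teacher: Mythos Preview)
Your proposal identifies the right list of ingredients but assembles them backwards, and this is a genuine gap rather than a stylistic difference. Two specific points:

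\emph{First}, the quadratic term $\mathcal{E}_2' = F^{ij}u_{ik}u_{jk}$ is not a bad term to be ``handled'' or ``absorbed''; it is the main good term. In the paper's argument the first-order optimality condition at the interior maximum forces the minimum eigenvalue $w_{kk}$ of $M[u]$ to satisfy $w_{kk}\le -\tfrac12\alpha M_1^2 + o(M_1^2)$, and then one needs the lower bound $F^{kk}\ge \mathscr{T}/n$ in that direction to extract $\mathcal{E}_2' \gtrsim F^{kk}w_{kk}^2 \gtrsim \tfrac{1}{n}\alpha^2 M_1^4\,\mathscr{T}$. This inequality $F^{kk}\ge \mathscr{T}/n$ is the key replacement for F7, and it follows purely from concavity F2 and orthogonal invariance (the standard observation that $\lambda_i\ge\lambda_j$ implies $D_if\le D_jf$). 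Your sketch never invokes this fact, and without it there is no coercive term of order $M_1^4\mathscr{T}$ to close the argument.

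\emph{Second}, you misread both the role of \eqref{3.1} and the identity for $F^{ij}u_{ij}$. The equation gives $F(M[u])=B$, not $F^{ij}(M[u])_{ij}=B$; what actually appears from the $\alpha$-term is $F^{ij}w_{ij}=r\cdot F_r$, which can be very negative when $a_0=-\infty$. It is precisely this term, multiplied by $\alpha M_1^2$, that condition \eqref{3.1} is designed to control: \eqref{3.1} gives $\alpha M_1^2\, r\cdot F_r \ge -\epsilon\,\alpha^2 M_1^4\,\mathscr{T}$ (since $|\lambda_0|\sim \alpha M_1^2$), and this is then absorbed into the good $\tfrac{1}{n}\alpha^2 M_1^4\,\mathscr{T}$ coming from $\mathcal{E}_2'$. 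Your proposed use of \eqref{3.1} on $\mathcal{E}_2'$, and your claim that the $\alpha$-term yields ``a definite positive multiple of $|Du|^2\mathscr{T}$'', are both incorrect. The paper also uses the simpler auxiliary function $v=|Du|^2+\alpha M_1^2(u-u_0)$ rather than an exponential weight, which keeps the bookkeeping cleaner.
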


%Alternatively, we can establish the gradient estimate theorem with the help of the subsolution $\underline u$ when $A$ and $B$ satisfy the structure conditions \eqref{structure condition -2} and \eqref{Similar to A3w}.
%\begin{Theorem}\label{Th3.2}
%Assume that $\mathcal{F}$ satisfies F1-F3, $A, B\in C^1(\bar \Omega\times \mathbb{R}\times \mathbb{R}^n)$ satisfying \eqref{structure condition -2} and \eqref{Similar to A3w}, $B>a_0$ is convex in $p$, $u\in C^3(\Omega)\cap C^2(\bar \Omega)$, $\underline u \in C^2(\Omega)\cap C^1(\bar \Omega)$ are an admissible solution and a strict admissible subsolution of equation \eqref{1.1} respectively.
%Then we have the estimate \eqref{global 1nd bound regular A},
%where the constant $C$ depends on $F, A, B, \Omega, \underline u$ and $|u|_{0;\Omega}$.
%\end{Theorem}
%The proof of Theorem \ref{Th3.2} follows an idea in Remark 3.1 in \cite{JT-oblique-I}, which requires a careful deduction of a barrier inequality. For completeness, we present its proof below.

\begin{proof}[Proof of Theorem \ref{Th3.2}.]

The proof of Theorem \ref{Th3.2} here follows from a slight modification of the proof of Theorem \ref{Th3.1}. The technical details are somewhat simpler as we can employ an auxiliary function of the form,
\begin{equation}\label{aux gb}
v: = |Du|^2 + \alpha M_1^2 \eta,
\end{equation}
in $\Omega$, where $\eta= u-u_0$,  $M_1=\sup\limits_{\Omega}|Du|$, $u_0 =  \inf \limits_\Omega u$ and $\alpha$ is a positive constant satisfying
\begin{equation}\label{restriction}
\alpha \le \frac{1}{2 \mathop{\rm osc}_{\Omega}\eta}.
\end{equation}
In place of (3.32) in \cite{JT-oblique-I}, we now obtain from our strengthening of \eqref{structure condition 1},
\begin{equation}
\mathcal {L} \eta \ge F^{ij}w_{ij} - C(1+\mathscr{T})(\omega|Du|^2+1),
\end{equation}
where $C$ is a positive constant and $\omega = \omega(|Du|)$ a positive decreasing function on $[0,\infty)$ tending to $0$ at infinity, depending on $A,B$ and $\Omega$. We consider the case that the maximum of $v$ occurs at a point $x_0\in \Omega$.
Following the proof of case (ii) of Theorem 1.3 in \cite{JT-oblique-I} with $g=|Du|^2$ and our simpler $\eta$, we obtain, in place of inequality (3.42) in \cite{JT-oblique-I},
\begin{equation}\label{push a negative w11}
w_{11} \le   -\frac{1}{2}\alpha M^2_1+C(\omega|Du|^2+1).
\end{equation}
Now we observe that the estimate \eqref{push a negative w11} is clearly applicable to the minimum eigenvalue $w_{kk}$ of $M[u]$ and moreover by F2 we must have $F^{kk} \ge \mathscr {T}/n$; (see \cite{Urbas1995} and Remark \ref{Rem 3.1} below). 
Retaining the term $\mathcal{E}_2^\prime = F^{ij}u_{ik}u_{jk}$ in  (3.9) in \cite{JT-oblique-I}, instead of using (3.43) in \cite{JT-oblique-I}, we now obtain at $x_0$, in place of inequality (3.45) in  \cite{JT-oblique-I}, using F5($\infty$), \eqref{3.1} and \eqref{restriction},
\begin{equation}\label{combining together}
\begin{array}{rl}
0 \ge \mathcal{L}v \ge \!\!&\!\!\displaystyle   \mathcal{E}_2^\prime - C(1+\mathscr{T}) (\omega|Du|^4 +1) \\
                                   \!\!&\!\!\displaystyle +  \alpha  M^2_1[ F^{ij}w_{ij} - C(1+\mathscr{T})(\omega|Du|^2 +1)] \\
                              \ge \!\!&\!\!\displaystyle [\frac{1}{n}\alpha^2  M^4_1 - C (\omega|Du|^4 +1)]\mathscr{T},
\end{array}
\end{equation}
and we conclude $M_1\le C$ as desired. 
\end{proof}

\begin{remark}\label{Rem 3.1}
The concavity F2 and orthogonal invariance of $\mathcal F$ to imply that if $F(r) = f(\lambda)$, where $\lambda = (\lambda_1, \cdots, \lambda_n)$ denote the eigenvalues of $r \in\Gamma$, then $D_if \le D_jf$ at any fixed point $\lambda$, where $\lambda_i\ge \lambda_j$. 
Indeed, by applying the mean value theorem to the function $g = D_if - D_jf$ at the points $\lambda$ and $\lambda^*$ where $\lambda^*$ is given by exchanging $\lambda_i$ and $\lambda_j$ in $\lambda$, we have
\begin{equation}\label{g lambda lambda*}
g(\lambda) - g(\lambda^*) = Dg(\hat \lambda) (\lambda-\lambda^*) = [D_{ii}f(\hat \lambda) + D_{jj}f(\hat \lambda) - 2D_{ij}f(\hat \lambda)](\lambda_i-\lambda_j) \le 0,
\end{equation}
where $\hat \lambda = \theta\lambda + (1-\theta)\lambda^*$ for some constant $\theta \in (0,1)$, F2 and $\lambda_i\ge \lambda_j$ are used to obtain the inequality. Since $g(\lambda^*) = - g(\lambda)$ holds by symmetry of $f$,  \eqref{g lambda lambda*} implies $g(\lambda)\le 0$, and hence $D_if(\lambda) \le D_jf(\lambda)$.
\end{remark}

With these {\it a priori} derivative estimates in Theorems \ref{Th1.1} and \ref{Th3.1}, we can now give the proof of the existence result, Theorem \ref{Th1.2}, using method of continuity.
\begin{proof}[Proof of Theorem \ref{Th1.2}]
First, we need to establish the solution bound and the full gradient bound.
The bounded viscosity solution $\bar u$ and the subsolution $\underline u$ can provide the solution bound, namly 
\begin{equation}\label{SB}
\underline u \le u\le \bar u, \ {\rm in} \ \bar\Omega.
\end{equation}
For the gradient estimate on $\partial\Omega$, the tangential derivatives of $u$ are given by the Dirichlet boundary condition and the inner normal derivative bound from below is controlled by using the subsolution $\underline u$. From the admissibility of $u$ and $\Gamma \subset \Gamma_1$, we have $\Delta u \ge {\rm trace}(A)$, which leads to an inner normal derivative estimate of $u$ from above on $\partial\Omega$, under quadratic structure conditions of ${\rm trace}(A)$ with respect to $p$, see proof of Theorem 14.1 in \cite{GTbook}. We then obtain the gradient estimate of $u$ on $\partial\Omega$,
\begin{equation}\label{BGB}
\sup_{\partial\Omega} |Du| \le C,
\end{equation}
where the constant $C$ depends on $\Omega, \varphi$ and $|\underline u|_{1;\Omega}$. If \eqref{structure condition 1} and \eqref{structure condition 2} hold, the global gradient estimate \eqref{global 1nd bound regular A} holds in Theorem \ref{Th3.1}. If $\Gamma=K^+$ and $A(x,p)\ge O(|p|^2)I$ as $|p|\rightarrow \infty$, uniformly for $x\in \Omega$, the global gradient estimate \eqref{global 1nd bound regular A} holds in Section 4 in \cite{JTY2013}. Combining the solution estimates \eqref{SB}, global gradient estimate \eqref{global 1nd bound regular A}, and the boundary gradient estimate \eqref{BGB}, we obtain
\begin{equation}\label{lower order estimate}
  \sup_{\Omega} |u| + \sup_{\Omega} |Du| \le C,
\end{equation}
where the constant $C$ depends on $F, A, B, \Omega, \varphi, \bar u$ and $|\underline u|_{1;\Omega}$.

Then from the lower order estimate \eqref{lower order estimate} and the second derivative estimate \eqref{full 2nd estimate}, we have uniform estimates in $C^2(\bar \Omega)$ for classical admissible solutions of the Dirichlet problems
    \begin{equation}\label{ut equation}
    \mathcal{F}[u] = tB(\cdot, Du)+ (1-t) \mathcal{F}[\underline u], \quad {\rm in} \ \Omega,
    \end{equation}
    \begin{equation}\label{ut boundary}
    u=\varphi, \quad {\rm on} \ \partial\Omega,
    \end{equation}
for $0\le t\le 1$, where $\underline u$ is a subsolution. From the Evans-Krylov estimates, (Theorem 17.26' in \cite{GTbook}), we have the H\"older estimate for second derivatives of the admissible solution to the Dirichlet problem \eqref{ut equation}-\eqref{ut boundary}. Then the existence follows from the method of continuity, (Theorem 17.8 in \cite{GTbook}), and the uniqueness  from the maximum principle.

Moreover, if $\Gamma=\Gamma_k$ for $k>n/2$, we have the continuity estimate $|u(x)-u(y)|\le C|x-y|^\alpha (R^{-\alpha} \mathop{\rm osc}\limits_{\Omega\cap B_R}u+1)$ in (i), (iii) of Lemma 3.1 in \cite{JT-oblique-I}. By combining this continuity estimate and the local gradient estimates, we can still obtain the gradient estimate by replacing  \eqref{structure condition 1} by \eqref{structure condition 3} and extending ``$o$'' to ``$O$'' in \eqref{structure condition 2}, (see the last part of Theorem 3.1 in \cite{JT-oblique-I}). We then obtain the existence and uniqueness of a classical admissible solution and complete the proof.
\end{proof}

With the alternative gradient estimate in Theorem \ref{Th3.2}, we state the following existence result as a corollary of Theorem \ref{Th1.2}.

\begin{Corollary}\label{Cor 3.1}
Assume that $\mathcal{F}$ is orthogonally invariant and satisfies F1-F3, F5($\infty$) and \eqref{3.1} in $\Gamma\subset \Gamma_1$, $\Omega$ is a bounded domain in $\mathbb{R}^n$ with $\partial\Omega\in C^4$, $A\in C^2(\bar \Omega\times \mathbb{R}^n)$ is regular in $\bar \Omega$, $B>a_0, \in C^2(\bar \Omega\times \mathbb{R}^n)$ is convex with respect to $p$. Assume there exist a bounded viscosity supersolution $\bar u$ and a subsolution $\underline u\in C^2(\bar \Omega)$ satisfying $\underline u = \varphi$ on $\partial\Omega$ with $\varphi\in C^4(\partial\Omega)$. Assume also \eqref{structure condition 1}  and \eqref{structure condition 2} hold, with ``O'' replaced by ``o'' in \eqref{structure condition 1}.
Then there exists a unique admissible solution $u\in C^3(\bar \Omega)$ of the Dirichlet problem \eqref{1.1}-\eqref{1.2}.
\end{Corollary}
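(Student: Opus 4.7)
The plan is to adapt the proof of Theorem \ref{Th1.2} essentially verbatim, replacing the gradient estimate from Theorem \ref{Th3.1} with that of Theorem \ref{Th3.2}. All other ingredients — the $C^0$ bound, the boundary gradient estimate, the full second derivative estimate, the Evans-Krylov theory, and the method of continuity — carry over without modification.

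First I would obtain the solution bound $\underline u \le u \le \bar u$ on $\bar \Omega$ from the comparison principle, using the admissible subsolution $\underline u$ and the bounded viscosity supersolution $\bar u$. Next, since $\Gamma \subset \Gamma_1$ and the tangential derivatives on $\partial\Omega$ are prescribed by $\varphi$, the boundary gradient estimate $\sup_{\partial\Omega}|Du| \le C$ follows exactly as in the proof of Theorem \ref{Th1.2}: the subsolution $\underline u$ gives the lower bound of the inner normal derivative, while the upper bound uses $\Delta u \ge \operatorname{trace}(A)$ together with the quadratic structure of $A$ in $p$ via the standard barrier in Theorem 14.1 of \cite{GTbook}.

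I would then invoke Theorem \ref{Th3.2} in place of Theorem \ref{Th3.1}. Its hypotheses — orthogonal invariance, F1-F3, F5($\infty$), \eqref{3.1}, the strengthened \eqref{structure condition 1} (with ``$o$'' in place of ``$O$'') and \eqref{structure condition 2} — are precisely those of the corollary, so it yields the global gradient bound $\sup_\Omega|Du| \le C(1+\sup_{\partial\Omega}|Du|)$. Combined with the boundary gradient estimate, this gives the full estimate $|u|_{1;\Omega} \le C$. With this in hand, Theorem \ref{Th1.1} supplies $\sup_\Omega|D^2u| \le C$, since its remaining hypotheses (orthogonal invariance, F1-F3 in $\Gamma\subset\Gamma_1$, regularity of $A$, convexity of $B$ in $p$, and the admissible subsolution $\underline u$ with $\underline u=\varphi$ on $\partial\Omega$) are all included in the assumptions of the corollary.

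Once uniform $C^2(\bar\Omega)$ estimates are established along the continuity family \eqref{ut equation}-\eqref{ut boundary}, the Evans-Krylov theorem (Theorem 17.26' of \cite{GTbook}) yields uniform $C^{2,\alpha}(\bar\Omega)$ bounds; the method of continuity (Theorem 17.8 of \cite{GTbook}) then produces a classical admissible solution for $t=1$, with uniqueness following from the comparison principle. There is no genuine obstacle: the only point requiring verification is that the hypotheses of the corollary match those of Theorem \ref{Th3.2}, which is built into the formulation since F7 has been swapped exactly for F5($\infty$) and \eqref{3.1}, with the mild strengthening of \eqref{structure condition 1} that Theorem \ref{Th3.2} demands.
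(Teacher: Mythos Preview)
Your proposal is correct and matches the paper's approach exactly: the paper presents Corollary \ref{Cor 3.1} as an immediate consequence of the proof of Theorem \ref{Th1.2}, with the sole modification that the global gradient estimate comes from Theorem \ref{Th3.2} rather than Theorem \ref{Th3.1}. All the remaining steps you outline---the solution bound via $\underline u$ and $\bar u$, the boundary gradient estimate, Theorem \ref{Th1.1} for second derivatives, Evans--Krylov, and the method of continuity---are precisely those of the proof of Theorem \ref{Th1.2} and carry over unchanged.
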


If $a_0$ is finite, condition \eqref{3.1} in Corollary \ref{Cor 3.1} can be dispensed with as it is automatically satisfied. If $B$ is bounded, F5($\infty$) in Corollary \ref{Cor 3.1} can be replaced by F5. Recalling that F5 is implied by F1, F2 and F3 when $a_0$ is finite (see Section 4.2 in \cite{JT-oblique-I}), hence we can replace ``F1-F3, F5($\infty$) and \eqref{3.1}'' by ``F1-F3'' in Corollary \ref{Cor 3.1} in the case when $a_0$ is finite and $B$ is bounded. When $B=B(x)>a_0, \in C^2(\bar \Omega)$ for finite $a_0$, Corollary \ref{Cor 3.1} holds automatically with ``F1-F3, F5($\infty$) and \eqref{3.1}'' replaced by ``F1-F3'', (since $B$ is bounded and satisfies the convexity condition with respect to $p$).

\begin{remark}
In Theorems \ref{Th1.1}, \ref{Th1.2} and Corollary \ref{Cor 3.1}, the assumption ``$B$ is convex with respect to $p$'' is assumed to guarantee the global second derivative estimate \eqref{global 2nd bound regular A}, see Theorem 3.1 in \cite{JT-oblique-II}. As in Remark 3.2 in \cite{JT-oblique-II}, for $k$-Hessian operators $\mathcal{F}_k=(S_k)^{1/k}$ in the cases $k=1, 2$ or $n$, estimate \eqref{global 2nd bound regular A} can hold without the convexity assumption on $B$ with respect to $p$. Consequently, in these particular $k$-Hessian cases, Theorems \ref{Th1.1}, \ref{Th1.2} and Corollary \ref{Cor 3.1} can still hold without the hypothesis that $B$ is convex with respect to $p$.
\end{remark}

\begin{remark}
Note that Theorem \ref{Th1.2} and Corollary \ref{Cor 3.1} embrace many examples of matrices $A$ and operators $\mathcal{F}$. In particular, one can refer to \cite{MTW2005, TruWang2009, Loeper2009,JT-oblique-I} for examples of the matrices $A$ and \cite{JT-oblique-I, Tru2019} for examples of the operators $\mathcal{F}$. Note that when the operator $\mathcal{F}$ is given by $\log \det$, (or $\det^{1/n}$), and $A$ is a strictly regular matrix generated by an optimal transportation cost function, then we need only assume $B = B(x)$ is uniformly H\"older continous  for global second derivative bounds \cite{HJL2015}, as in the linear Schauder theory and it would be interesting to know if such type of results extend more generally to say $k$-Hessians or just regular matrix functions $A$.
\end{remark}

\begin{remark}
As in \cite{JT-oblique-I}, the main examples of the admissible cones $\Gamma$ in Theorem \ref{Th1.1} and Theorem \ref{Th1.2} are the G{\aa}rding's cones $\Gamma_k$ and the $k$-convex cones $\mathcal{P}_k$, which are defined by \eqref{Garding's cone} and
\begin{equation}
\mathcal{P}_k :=\left\{ r\in \mathbb{S}^n| \ \sum\limits_{s=1}^k \lambda_{i_s}>0 \right\},
\end{equation}
where $i_1, \cdots, i_k \subset \{1, \cdots, n\}$, $\lambda(r)=(\lambda_1(r), \cdots, \lambda_n(r))$ denote the eigenvalues of the matrix $r\in \mathbb{S}^n$. 
Note that these two kinds of cones $\Gamma_k$ and $\mathcal{P}_k$ satisfy $\Gamma_n \subset \Gamma_k \subset \Gamma_1$ and $\Gamma_n \subset \mathcal{P}_k \subset \Gamma_1$ for $k=1, \cdots, n$.
For the background and inclusion relations of the cones $\Gamma_k$ and $\mathcal{P}_k$, one can refer, for example, to \cite{LCJ2017}; see also \cite{Tru2019} for more general families. 
\end{remark}

%%%%%%%%%%%%%%%%%%%%%%%%%%%%%%%%%%%%%%%%%%%%%%%%%%%%%%%%%%%%%%%%%%%%%%%%%%%%%%%%%%%%%%%%%%%%%%%%%%%%%%%%%%%%%%%%%%%%%%%%%%%%%%%%%%%%%%%

%\newpage

\end{document}